\numberwithin{equation}{section}     
\setlist[enumerate,1]{label={\upshape(\roman*)},ref=\roman*}
\setlist[enumerate,2]{label={\upshape(\alph*)},ref=\alph*}
\newtheorem{theorem}{Theorem}[section]
\newtheorem{corollary}[theorem]{Corollary}
\newtheorem{lemma}[theorem]{Lemma}
\newtheorem{proposition}[theorem]{Proposition}
\newtheorem{prop}[theorem]{Proposition}
\newtheorem{claim}[theorem]{Claim}
\newtheorem{thmintro}{Theorem}
\theoremstyle{definition}
\newtheorem{definition}[theorem]{Definition}
\newtheorem{remark}[theorem]{Remark}
\newcommand{\eps}{\varepsilon}
\newcommand{\wt}[1]{\widetilde{#1}}
\newcommand{\R}{\mathbb R}
\newcommand{\RR}{\R}
\newcommand{\ZZ}{\mathbb Z}
\newcommand{\what}[1]{\widehat{#1}}
\newcommand{\wwp}{\widetilde \Phi}
\newcommand{\cW}{\mathcal{W}}
\newcommand{\cF}{\mathcal{F}}
\newcommand{\cG}{\mathcal{G}}
\newcommand{\cL}{\mathcal{L}}
\newcommand{\cC}{\mathcal{C}}
\newcommand{\cO}{\mathcal{O}}
\newcommand{\mt}{\widetilde M}
\newcommand{\hht}{\wt h}
\newcommand{\tY}{\wt Y}
\newcommand{\wphi}{\wt{\phi_t}}
\newcommand{\cs}{\cW^{cs}}
\newcommand{\cu}{\cW^{cu}}
\newcommand{\wh}{\widetilde h}
\title[Accessibility and ergodicity]
{Accessibility and ergodicity for collapsed Anosov flows}
\author{Sergio R.\ Fenley} 
\address{Florida State University, Tallahassee, FL 32306}
\email{fenley@math.fsu.edu}
\author{Rafael Potrie} 
\address{Centro de Matem\'atica, Universidad de la Rep\'ublica, Uruguay}
\email{rpotrie@cmat.edu.uy}
\urladdr{http://www.cmat.edu.uy/~rpotrie/}
\thanks{S.F. was partially supported by Simons Foundation grant numbers 280429 and 637554. R. P. was partially supported by CSIC 618.}
\begin{document}

\begin{abstract}
{We consider a class of 
partially hyperbolic diffeomorphisms introduced in \cite{BFP} which is open and closed and contains 
all known examples. If in addition the diffeomorphism 
is non-wandering, then we show it 
is accessible unless it contains a $su$-torus. This implies that these systems are ergodic when they preserve volume, confirming a conjecture by Hertz-Hertz-Ures \cite[Conjecture 2.11]{CHHU} for this class of systems.}

\bigskip

\noindent {\bf Keywords: } Partial hyperbolicity, ergodicity, 
accessibility, Anosov flows, foliations.
3-manifold topology, foliations.
%
%
\end{abstract}

\maketitle

\medskip
\medskip


\section{Introduction} 
In this paper we study the ergodicity problem for 3 dimensional partially hyperbolic diffeomorphisms. It has been shown in \cite{HHU-Ergo} that ergodicity (in fact the K-property) is abundant, establishing a conjecture of Pugh and Shub in this setting \cite{PS}. This lead to the belief that non-ergodic partially hyperbolic diffeomorphisms in dimension 3 can be described \cite{HHU-dim3} (see also \cite{CHHU}). 

Many recent works including our own, have tried to show that in certain manifolds or isotopy classes ergodicity holds for \emph{all} volume preserving partially hyperbolic diffeomorphisms (see \cite{HHU-dim3, HUres,GS,FP, HRHU}). Thanks to the reductions of \cite{GPS, BurnsWilkinson, HHU-Ergo}
 the problem boils down to the study of accessibility, and this has become a problem on its own (see \cite{HP-Nil, Hammerlindl, HS, FP} for general results about accessibility). 

Here, instead of fixing a manifold or isotopy class, we work on a specific class of partially hyperbolic diffeomorphisms that were introduced in \cite{BFP} motivated by \cite{BFFP,BFFP2,FP2}. 
They are called {\emph {collapsed Anosov flows}}.
Very roughly the dynamics of the partially hyperbolic
diffeomorphism is semiconjugated to a self
orbit equivalence of an Anosov flow, and the semiconjugacy
sends flow lines to curves tangent to the center bundle.
See section \ref{s.background} for a precise definition.
The class of partially hyperbolic diffeomorphisms which
are collapsed Anosov flows is 
an open and closed subset of all partially hyperbolic diffeomorphisms,
and in addition it includes all known examples in manifolds with non-solvable fundamental group. We note that this class is strictly larger than the ones previously known to satisfy \cite[Conjecture 2.11]{CHHU} as it contains the partially hyperbolic diffeomorphisms in the connected component of the examples constructed in \cite{BGP,BGHP}. 

Our main result is the following:

\begin{thmintro}\label{teo.main}
Let $f:M \to M$ be a collapsed Anosov flow of a closed 3-manifold $M$ whose fundamental group is not virtually solvable. Assume that the non-wandering set of $f$ is all of $M$. Then, $f$ is accessible. In particular, if $f$ is $C^{1+}$ and volume preserving it is a K-system (and consequently
ergodic and mixing). 
\end{thmintro}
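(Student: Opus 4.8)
The plan is to separate the two assertions. The final sentence---the K-property, hence ergodicity and mixing---is a consequence of accessibility through the now-standard ergodicity machinery: for a $C^{1+}$ volume-preserving partially hyperbolic diffeomorphism with one-dimensional center bundle, center bunching is automatic, so the Burns--Wilkinson criterion (building on the Hopf-type and juliennes arguments in \cite{BurnsWilkinson,GPS,HHU-Ergo}) upgrades accessibility to the K-property, and in particular to ergodicity and mixing. Thus the entire content is to prove that $f$ is \emph{accessible}, and everything below concerns that.

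To prove accessibility I would argue by contradiction and invoke the structure theory of accessibility classes for one-dimensional center. If $f$ is not accessible, the set $\Gamma$ of points whose accessibility class fails to be open is nonempty; by the work of Didier and of Rodriguez Hertz--Rodriguez Hertz--Ures, $\Gamma$ is a nonempty compact $f$-invariant set laminated by complete $C^1$ surfaces tangent to $E^s\oplus E^u$ (the non-open accessibility classes themselves). Each such \emph{su-surface} is transverse to the center bundle $E^c$, and meets the center-stable and center-unstable branching foliations $\cW^{cs}$ and $\cW^{cu}$ of \cite{BFP} along its stable and unstable curves, respectively, so it carries an induced saddle structure. The goal is to show that such a lamination cannot exist unless it contains an su-torus, matching the exceptional case of the statement.

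The heart of the argument, and the step I expect to be the main obstacle, is to force these su-surfaces to be compact tori using the collapsed Anosov flow structure together with the hypothesis that the non-wandering set of $f$ is all of $M$. Lifting to the universal cover $\widetilde M$ and passing to the associated Anosov flow $\phi_t$ through the semiconjugacy, the center curves (the images of the flow lines) are transverse to the lifted lamination, and their controlled, quasigeodesic behaviour with respect to $\phi_t$ constrains how an su-leaf can embed in $\widetilde M$. The non-wandering hypothesis supplies recurrence at every point, which I would use to exclude non-compact leaves of a minimal sublamination: recurrence produces returns that, combined with the contraction along $E^s$ and expansion along $E^u$ within the leaf, trap the leaf and yield a compact, hence periodic, leaf. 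The delicate points are the possible branching of the center foliation and the accumulation behaviour of $\Gamma$, and it is precisely here that the flow-theoretic input of the collapsed Anosov flow picture must do the work.

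Finally, a compact periodic su-surface is a $2$-torus on which an iterate of $f$ acts with hyperbolic (Anosov) dynamics, that is, an su-torus. By the classification of $3$-manifolds admitting such Anosov tori (Rodriguez Hertz--Rodriguez Hertz--Ures), the presence of an su-torus forces $M$ to be a torus bundle or a closely related model, whence $\pi_1(M)$ is virtually solvable, contradicting the hypothesis. Therefore $\Gamma=\emptyset$, so $f$ is accessible, and the ergodicity conclusion follows from the reduction of the first paragraph.
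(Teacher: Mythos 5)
Your first, second, and last paragraphs are fine: the reduction of the K-property to accessibility via Burns--Wilkinson is exactly how the paper proceeds, the non-accessibility structure theorem (Hertz--Hertz--Ures) giving an invariant lamination tangent to $E^s\oplus E^u$ is the paper's starting point, and an $su$-torus does indeed force virtually solvable fundamental group. But the heart of your argument --- paragraph three --- is a genuine gap, not a proof. The claim that ``recurrence produces returns that, combined with the contraction along $E^s$ and expansion along $E^u$ within the leaf, trap the leaf and yield a compact, hence periodic, leaf'' has no mechanism behind it, and no such mechanism can exist at this level of generality: if non-wandering plus an invariant $su$-lamination forced a compact leaf, that would prove the full Hertz--Hertz--Ures conjecture for every partially hyperbolic diffeomorphism on a manifold with non-solvable fundamental group, with the collapsed Anosov flow hypothesis never actually used (in your sketch it appears only as the wish that ``the flow-theoretic input \ldots must do the work''). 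Worse, the structure theorem you invoke already tells you the lamination has \emph{no} closed leaves, so your plan amounts to restating the desired contradiction rather than deriving it.

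The paper's mechanism is entirely different and is where the collapsed Anosov flow structure genuinely enters. Since $\pi_1(M)$ is not virtually solvable, the associated Anosov flow is not a suspension, so its orbit space contains a lozenge $\cL$ fixed by some nontrivial $\gamma\in\pi_1(M)$; a separate argument on self orbit equivalences (Proposition \ref{p.periodiclozenge}) shows that, except in two cases handled by \cite{FP} ($M$ hyperbolic with $\beta$ a power of a one step up map, or $M$ Seifert with pseudo-Anosov base action), the lozenge can be chosen invariant also under a lift of an iterate of $\beta$. The key dynamical fact is Proposition \ref{p.lozenges}: $\gamma$ acts increasingly on one corner orbit and decreasingly on the other, so no leaf of the lifted lamination (which separates $\mt$) can meet the images under $\hht$ of both corners. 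This produces a $\gamma$-invariant leaf $L$ separating the two collapsed corner orbits (Proposition \ref{p.lamination}), and then a closed curve in the intersection of $L$ with the collapsed weak stable or unstable leaf of a corner, invariant under $\gamma$ and under a lift of an iterate of $f$ (Proposition \ref{p.closedcurve}). The contradiction is not a compact leaf but an $f$-periodic closed curve inside an invariant surface tangent to $E^s\oplus E^u$, which is impossible by Proposition \ref{p.curvesgen}. To repair your proposal you would need to replace your paragraph three by this lozenge/separation argument (or something equally concrete), and you would also need to account for the two exceptional cases that the paper delegates to \cite{FP}.
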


Again, precise definitions are given in \S~\ref{s.background}. It is worth noting that in \cite{BGP, BGHP} the argument to get ergodic examples is perturbative, so not even the concrete examples constructed there were known to be ergodic (but it was known that an open and dense subset of the examples in a volume preserving neighborhood were). 
The ergodicity for specific examples follows from the results of this paper.

We note that the assumption that $M$ does not have solvable fundamental group is necessary since the time one map of a 
suspension of a toral automorphism is not accessible (and in fact contains a $su$-torus). We refer the reader to \cite{Hammerlindl} for a complete treatment of accessibility in the class of 3-manifolds with virtually solvable fundamental group. 

Theorem \ref{teo.main} is complementary to what was done in \cite{FP} but some results have intersection
(in \cite{FP} it is proved that a more restrictive class, that of \emph{discretized Anosov flows}, are always accessible without the non-wandering assumption). In fact, the proof of Theorem \ref{teo.main} uses \cite{FP} at some point. However, a slightly weaker version of Theorem \ref{teo.main} can be proven independently of \cite{FP} (see \S~\ref{ss.scaf}). 

In \cite{FP} accessibility and ergodicity are established unconditionally in certain manifolds or isotopy classes of diffeomorphisms. On the other hand, in \cite{FP2} we showed that every partially hyperbolic diffeomorphism in a hyperbolic 3-manifold is a collapsed Anosov flow, so this paper gives a different proof\footnote{The argument of \cite{FP} achieves accessibility in hyperbolic 3-manifolds in a shorter way as it uses much less of the classification. In particular to use the proof we present here to deduce ergodicity in hyperbolic 3-manifolds one should use the full classification of partially hyperbolic diffeomorphisms in hyperbolic $3$-manifolds given in \cite{FP2} which is way longer.} of some of the results of \cite{FP}. Note that in \cite{FP3} we plan to show that every partially hyperbolic diffeomorphism in a Seifert 3-manifold is a collapsed Anosov flow, so this paper will imply that \cite[Conjecture 2.11]{CHHU} also holds for Seifert manifolds extending \cite{HRHU,FP} where this was shown for certain isotopy classes. 

The results of this article emphasize what \cite{BFP} proposes: it is valuable to understand the dynamics of collapsed Anosov flows and separate it from the classification of general partially hyperbolic diffeomorphisms. The proof uses some fine properties of (topological) Anosov flows that had not been previously used in the study of partially hyperbolic dynamics (see \S~\ref{ss.topologicalAnosovflows}).

\section{Background}\label{s.background}
In this paper $M$ will always denote a closed 3-manifold. In this section we briefly recall some basic properties of Anosov flows, partially hyperbolic diffeomorphisms and collapsed Anosov flows. Except in \S~\ref{ss.topologicalAnosovflows} we only present definitions and quote results from elsewhere. In \S~\ref{ss.topologicalAnosovflows} we prove Proposition \ref{p.periodiclozenge} which is an observation about self orbit equivalences of topological Anosov flows that we will use in the proof of Theorem \ref{teo.main}.  We refer the reader to \cite{BFP,FP} for more detailed information. 

For simplicity and since Theorem \ref{teo.main} 
can be shown in finite lifts 
 we will assume without explicit mention that all objects are orientable and foliations are transversally orientable; we comment on this assumption in \S~\ref{ss.orient}. 

\subsection{Topological Anosov flows}\label{ss.topologicalAnosovflows}
A flow $\phi_t: M \to M$ generated by a continuous vector field $X$ is said to be a \emph{topological Anosov flow} if it is expansive and preserves a topological foliation (see \cite[\S 5]{BFP} for other equivalent definitions). A topological Anosov flow $\phi_t$ preserves two transverse foliations $\cF^{ws}$ and $\cF^{wu}$ with the property that orbits in $\cF^{ws}(x)$ approach the orbit of $x$ in the future while orbits in $\cF^{wu}(x)$ approach the orbit of $x$ in the past. Leaves of $\cF^{ws}$ and $\cF^{wu}$ are planes or cylinders\footnote{One could have M\"{o}bius bands but our standing assumption that everything is orientable excludes this.} and all cylinder leaves contain a unique periodic orbit.  We refer the reader to \cite{Barthelme} for generalities on Anosov flows in dimension 3.

We denote by $\wphi : \mt \to \mt$ the lift of the flow to the universal cover. A fundamental property for us is 
\cite{FenleyAnosov,BarbotFeuill} that the quotient $\cO_\phi$ of $\mt$ by the orbits of $\wphi$ is homeomorphic to $\RR^2$. 
In other words, the flow $\wphi$ in $\mt$ is {\emph{topologically}}
a product flow. However, geometrically this is rarely the case, and this will be a key fact (see Theorem \ref{teo.AF} below for a precise statement) that we will exploit to show accesibility. 

We call $\cO_\phi$ the \emph{orbit space} of $\phi$.  The lifts $\wt{\cF^{ws}}$ and $\wt{\cF^{wu}}$ are flow invariant and therefore
induce transverse one dimensional foliations $\cG^s$ and $\cG^u$ in the orbit space $\cO_\phi$. 
The fundamental group $\pi_1(M)$ preserves the collection of
flow lines, hence induces an action by homeomorphisms on $\cO_\phi$
which preserves the foliations $\cG^{s}$ and $\cG^{u}$.
Given an orbit $o \in \cO_\phi$ we call \emph{half leaf} 
of $\cG^{s}(o)$ (or $\cG^u(o)$) to a connected component of $\cG^s(o)\setminus \{o\}$ (or $\cG^u(o) \setminus \{o\}$). 

Given an orbit $o$ of $\wphi$ we view it as both an element or
point in $\cO_\phi$ and as a subset of $\mt$ consisting of all
points in the orbit.

Note that if for some $o \in \cO_\phi$ there is some non trivial
$\gamma \in \pi_1(M)$ such that $\gamma o = o$ then this means that $o$ corresponds to a periodic orbit $\alpha= \pi(o)$ of $\phi_t$. 
In fact this is an if and only if property.
We say that $\gamma$ acts \emph{increasingly} (resp. \emph{decreasingly}) on $o$ if $\gamma x = \wphi(x)$ for some $t > 0$ (resp. $t<0$) and some $x \in o$ (since $\gamma$ does not have fixed points in $\mt$ it is easy to see that this is independent on $x \in o$). Our orientability assumptions imply that if a deck transformation fixes some orbit, then it also fixes all the \emph{half leaves} of $\cG^s(o)$ and $\cG^u(o)$, i.e. the connected components of $\cG^s(o) \setminus \{o\}$, $\cG^u(o) \setminus \{o\}$.   

Let $\alpha_1, \alpha_2$ be periodic orbits of $\phi_t$.
We say that they are \emph{freely homotopic} 
if the unoriented curves $\alpha_1, \alpha_2$ are freely homotopic.
With our orientation conditions this is equivalent to saying that
there is a non trivial
deck transformation $\gamma \in \pi_1(M)$ and lifts $o_1, o_2$ of $\alpha_1, \alpha_2$ respectively such that $\gamma o_1 = o_1$ and $\gamma o_2 = o_2$ (note that we do not require $\gamma$ to act increasingly on both, or any of them). There are many subtleties with defining freely homotopic orbits, having to do with orientation on the orbits, taking powers of
orbits;  but we will not enter into them here. 
We refer the reader to \cite{Fen1998} for more details.

A key object for the main result here will be lozenges, see figure~\ref{fig1}.

\begin{definition}\label{defi.lozenge}
An open subset $\cL$ of the orbit space $\cO_\phi$ is said to be a \emph{lozenge} if there are two orbits $o_1, o_2$ called the \emph{corners} of the lozenge which verify that they have half leaves $A^s_1$ of $\cG^s(o_1)$ and $A^u_1$ of $\cG^u(o_1)$ which are disjoint from half leaves $A^s_2$ of $\cG^s(o_2)$ and $A^u_2$ of $\cG^u(o_2)$ and satisfy the following properties:
\begin{itemize}
\item A leaf of $\cG^s$ intersects $A^u_1$ if and only
if it intersects $A^u_2$. 
\item 
A leaf of $\cG^u$ intersects $A^s_1$ if and only if intersects 
$A^s_2$.
\item every point $p \in \cL$ satisfies that
$\cG^s(p)$ intersects $A^u_1$ and separates $o_1$ from $o_2$.
In addition $\cG^u(p)$ intersects $A^s_1$ and separates
$o_1$ from $o_2$.
\end{itemize}
The half leaves $A^s_1, A^s_2, A^u_1, A^u_2$ are called the \emph{sides} of the lozenge. 
\end{definition}

\begin{figure}[ht]
\begin{center}
\includegraphics[scale=0.58]{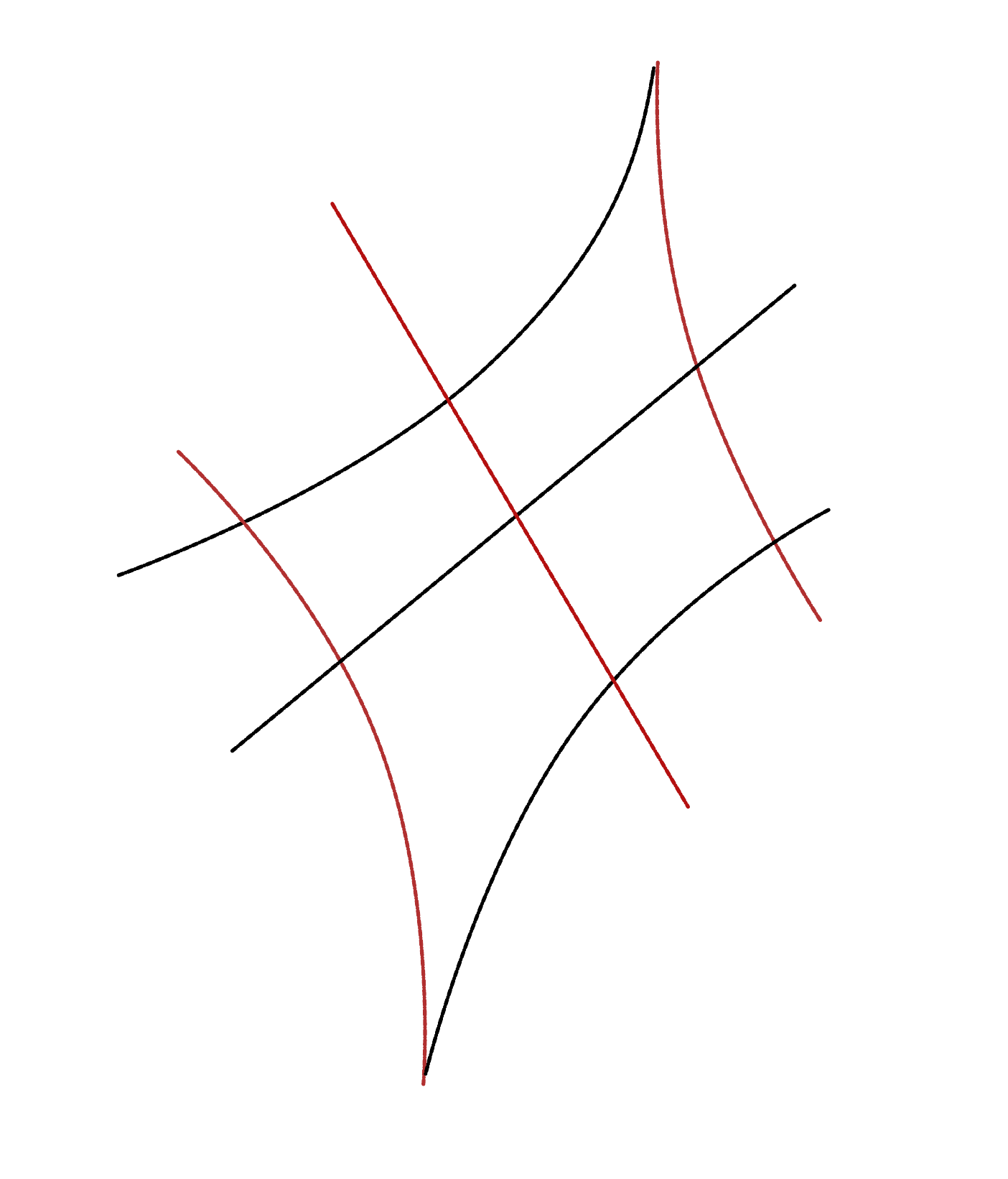}
\begin{picture}(0,0)
\put(-203,153){$o_1$}
\put(-130,153){$p$}
\put(-53,144){$o_2$}
\put(-160,52){$A_1^u$}
\put(-129,52){$A_2^s$}
\put(-87,77){{\small $\cG^u(p)$}}
\put(-196,91){{\small $\cG^s(p)$}}
\put(-113,225){$A_1^s$}
\put(-82,220){$A_2^u$}
\end{picture}
\end{center}
\vspace{-0.5cm}
\caption{{\small A lozenge. This figure is in the orbit space. So
topologically the region of $\mt$ which projects into the
lozenge is this lozenge times the reals.}}\label{fig1}
\end{figure}

A \emph{chain of lozenges} is a sequence $\cL_1, \ldots, \cL_k$ of lozenges such that the closures of $\cL_i \cap \cL_{i+1}$ in
the orbit space intersect either at a side or at a corner.  
A chain is minimal if a given lozenge occurs at most once in
the chain $-$ no backtracking.

We will need the following result, the first claim of which is \cite[Theorem 3.3]{Fen95} and the second is \cite[Corollary E]{Fen1998}. For the last claim see \cite{Fen1998} or \cite[Theorem C]{BF-toroidalPA}.

\begin{theorem}\label{teo.AF}
Let $\phi_t: M \to M$ be a topological Anosov flow. Then, 
\begin{enumerate}
\item Let $o_1, o_2$ orbits in $\cO_\phi$ which are fixed by a 
non trivial $\gamma \in \pi_1(M)$. Then $o_1$ and $o_2$ are connected by a finite chain of lozenges whose corners are fixed by $\gamma$. 
\item If $\phi_t$ is not a suspension then there are freely homotopic 
periodic orbits of $\phi_t$.
\end{enumerate} 
Moreover, every $\ZZ^2$ subgroup of the fundamental group is associated to at most two bi-infinite minimal chains of lozenges. 
The subgroup fixes each such chain of lozenges.
There are infinitely many non trivial elements in the $\ZZ^2$
subgroup which fix every lozenge in these chains.
\end{theorem}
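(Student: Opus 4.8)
All three statements concern the action of an element $\gamma\in\pi_1(M)$, or of a $\ZZ^2$ subgroup, on the orbit space $\cO_\phi\cong\RR^2$ preserving the transverse one-dimensional foliations $\cG^s$ and $\cG^u$, so I would build everything on two structural inputs. The first is the \emph{no bigon} property: a leaf of $\cG^s$ and a leaf of $\cG^u$ meet in at most one point, which I would deduce from expansiveness, since a bigon would produce two orbits remaining uniformly close in both time directions. The second is that whenever $\gamma$ fixes an orbit $o$, the leaves $\cG^s(o)$ and $\cG^u(o)$ are $\gamma$-invariant and, because $\gamma$ acts freely on $\mt$ (say increasingly on $o$), it acts on $\cG^s(o)$ as a contraction toward $o$ and on $\cG^u(o)$ as an expansion away from $o$, with $o$ the only fixed point on each. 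In particular every fixed orbit of $\gamma$ is an \emph{isolated} fixed point of the induced map on $\cO_\phi$, since $\gamma$ looks hyperbolic there, so the whole fixed set of $\gamma$ in $\cO_\phi$ is closed and discrete.

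For part (1), fix such a $\gamma$ and two fixed orbits $o_1\ne o_2$. I would first record that they lie on distinct stable and on distinct unstable leaves: if $\cG^s(o_1)=\cG^s(o_2)$, then $\gamma$ would have two fixed points on a leaf on which it acts as a fixed-point-free contraction, which is impossible. The core is a perfect-fit production step. Starting from $o_1$, I would follow the $\gamma$-invariant half-leaves of $\cG^s(o_1)$ and $\cG^u(o_1)$ toward $o_2$ and use the no bigon property to force a dichotomy: either an invariant stable leaf and an invariant unstable leaf intersect, in which case their unique intersection point is $\gamma$-fixed and hence a new corner strictly between the previous one and $o_2$; or they are asymptotic and make a perfect fit, in which case the two orbits are opposite corners of a lozenge. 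Iterating produces a chain of lozenges whose corners are all $\gamma$-fixed. For finiteness I would argue that the corners form a sequence monotone in the chain order and confined between $o_1$ and $o_2$, so that an infinite chain would converge to a limit orbit, necessarily fixed by $\gamma$ by continuity, contradicting the isolation of $\gamma$-fixed orbits; hence the chain terminates at $o_2$ after finitely many steps.

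For part (2) I would argue contrapositively through the product/branching dichotomy of $(\cG^s,\cG^u)$. If no two periodic orbits are freely homotopic, then by part (1) no nontrivial $\gamma$ fixes two distinct orbits. I would then show this forces the leaf spaces of $\cG^s$ and $\cG^u$ to be Hausdorff, i.e.\ there are no non-separated (branching) leaves: a branching pair of, say, stable leaves is permuted by $\pi_1(M)$, and compactness of $M$ together with recurrence would force some nontrivial element to fix such a configuration, producing $\gamma$-fixed corners and hence freely homotopic orbits, contrary to hypothesis. With no branching, $\cO_\phi$ is a genuine product $\RR\times\RR$ carrying $\cG^s,\cG^u$ as the horizontal and vertical foliations and the $\pi_1(M)$-action by product-preserving maps; this is exactly the orbit-space signature of a suspension, from which I would reconstruct the fibration over $S^1$ and conclude $\phi_t$ is a suspension. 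The substantive implication here is \emph{branching $\Rightarrow$ free homotopy}, which rests on the mechanism of part (1).

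For the moreover statement, let $A\cong\ZZ^2$ be a subgroup. I would first locate a primitive $\gamma_1\in A$ with a fixed orbit and apply part (1) to produce its chains of lozenges; any $\gamma_2\in A$ commutes with $\gamma_1$, hence permutes the $\gamma_1$-fixed orbits and carries lozenges to lozenges. Commutativity then pins the configuration down: the centralizer constraint forces $A$ to preserve the maximal bi-infinite minimal chains through the common corners, and since fixing the two ends of such a rigid chain forces fixing every corner, a finite-index---hence infinite---subgroup of $A$ fixes every lozenge, yielding the asserted infinitely many elements. The bound of \emph{at most two} such chains I would extract from the planar, rank-two nature of $A$, which leaves room for at most two independent directions of $A$-invariant chains. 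The step I expect to be the genuine obstacle is the perfect-fit production together with its finiteness in part (1): controlling the global interaction of the invariant stable and unstable leaves of $o_1$ and $o_2$, ruling out wild asymptotic behaviour, and guaranteeing that the chain closes up after finitely many corners is where the careful use of the no bigon property and of the contraction/expansion dynamics of $\gamma$ is concentrated, and it is the technical core on which both part (2) and the $\ZZ^2$ statement rest.
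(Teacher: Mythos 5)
The paper does not actually prove this theorem: part (1) is quoted from \cite[Theorem 3.3]{Fen95}, part (2) from \cite[Corollary E]{Fen1998}, and the final statement from \cite{Fen1998} or \cite[Theorem C]{BF-toroidalPA}; the only argument the paper supplies is the remark that a $\ZZ^2$ subgroup preserving a bi-infinite minimal chain acts on its linearly ordered set of lozenges, which is order-isomorphic to $\ZZ$, so that the induced homomorphism has infinite kernel. Your sketch correctly identifies several structural inputs (no bigons, the hyperbolic-like action of $\gamma$ at a fixed orbit, isolation of fixed points, and, for the last claim, essentially the same $\ZZ$-action argument the paper indicates), but key steps have genuine gaps. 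In part (1), the ``intersect or perfect fit'' dichotomy is asserted, not proved: the no-bigon property only rules out double intersections; it does not force two disjoint $\gamma$-invariant leaves to be asymptotic, and a priori they can simply diverge in the plane $\cO_\phi$ with no perfect fit. Forcing the perfect fit (limits of leaves crossing an invariant half-leaf, invariance of the limit configuration, non-Hausdorff leaf-space phenomena) is the technical core of \cite{Fen95}, and it is exactly what is missing. Your finiteness argument is also incomplete: $\cO_\phi$ is a non-compact plane, so a monotone sequence of corners ``confined between $o_1$ and $o_2$'' need not converge --- it could escape to infinity --- and isolation of fixed points alone does not rule this out. (A minor point: your contraction/expansion assignment is backwards; as Proposition \ref{p.lozenges} shows, if $\gamma$ acts increasingly on $o$ it \emph{expands} $\cG^s(o)$ and contracts $\cG^u(o)$ --- the paper flags this as counterintuitive precisely because the naive guess is wrong.)

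The most concrete error is in part (2): the step ``no branching $\Rightarrow$ $\cO_\phi$ is a genuine product'' is false. Hausdorff leaf spaces only say the flow is $\RR$-covered, and by the Fenley--Barbot dichotomy \cite{FenleyAnosov,BarbotFeuill} an $\RR$-covered Anosov flow is either a product or \emph{skewed}. The geodesic flow of a closed hyperbolic surface is skewed: its leaf spaces are Hausdorff, it is not a suspension, and its orbit space is not a product (not every stable leaf meets every unstable leaf). Your argument, as written, would prove that this flow is a suspension. The skewed case is exactly the one where freely homotopic periodic orbits (each orbit freely homotopic, under the paper's unoriented convention, to the inverse of another) arise \emph{without} any branching, so it cannot be absorbed into your branching mechanism; handling it requires the $\RR$-covered structure theory, and moreover ``product $\Rightarrow$ suspension'' is itself a theorem, not a formality. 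Finally, in the moreover statement, extracting ``at most two chains'' from ``the planar, rank-two nature of $A$'' is not an argument --- this bound is the content of \cite[Theorem C]{BF-toroidalPA} --- and you assume without justification that some element of $A$ fixes an orbit; for the fiber $\ZZ^2$ of a suspension no element does, so your proof must treat that case (where the count of chains is zero) rather than presuppose existence.
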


To get the last statement one uses that the $\ZZ^2$ subgroup
acts on the linearly ordered set of lozenges in the bi-infinite
chain of lozenges, which is order isomorphic to $\ZZ$.

As a consequence one deduces that every topological Anosov flow $\phi_t: M \to M$ which is not a suspension contains a lozenge whose corners are lifts
of periodic orbits of $\phi_t$ to $\mt$. This is the crucial place where we will use that the fundamental group is not solvable since suspensions are very different from the rest of Anosov flows in this respect.

\begin{corollary}\label{coro.AF}
If $\phi_t: M \to M$ is a topological Anosov flow which is not a suspension, then there exists a lozenge $\cL$ in $\cO_\phi$ which is fixed by some 
non trivial $\gamma \in \pi_1(M)$. 
\end{corollary}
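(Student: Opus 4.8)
The plan is to combine the two parts of Theorem \ref{teo.AF}, using the dictionary between free homotopy of periodic orbits and fixed orbits in the orbit space. Since $\phi_t$ is not a suspension, part (2) of Theorem \ref{teo.AF} produces two distinct freely homotopic periodic orbits $\alpha_1, \alpha_2$ of $\phi_t$. By the definition of freely homotopic orbits recalled above, there is a nontrivial $\gamma \in \pi_1(M)$ and lifts $o_1, o_2 \in \cO_\phi$ of $\alpha_1, \alpha_2$ with $\gamma o_1 = o_1$ and $\gamma o_2 = o_2$. As $\alpha_1 \neq \alpha_2$ project to distinct orbits of $\phi_t$, their lifts are distinct points of the orbit space, so $o_1 \neq o_2$.

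Next I would feed $o_1, o_2$ into part (1) of Theorem \ref{teo.AF}: being two orbits fixed by the common nontrivial $\gamma$, they are joined by a finite chain of lozenges $\cL_1, \ldots, \cL_k$ whose corners are all fixed by $\gamma$. Because $o_1 \neq o_2$, the endpoints of the chain are distinct, so the chain is nonempty, i.e.\ $k \geq 1$; I then select any single lozenge $\cL := \cL_1$ from it. Both corners of $\cL$ are orbits fixed by $\gamma$.

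It remains to upgrade \emph{corners fixed} to \emph{lozenge fixed}, which is the only genuinely delicate point. Here I would invoke the standing orientability hypothesis, which (as recorded in \S~\ref{ss.topologicalAnosovflows}) guarantees that a deck transformation fixing an orbit $o$ also fixes every half leaf of $\cG^s(o)$ and of $\cG^u(o)$. Applying this to each corner of $\cL$, one obtains that $\gamma$ fixes each of the four sides $A^s_1, A^u_1, A^s_2, A^u_2$ of $\cL$. Since $\gamma$ is a homeomorphism of $\cO_\phi$ preserving $\cG^s$ and $\cG^u$, and a lozenge is completely determined by its sides through the separation and intersection conditions of Definition \ref{defi.lozenge}, it follows that $\gamma \cL$ is a lozenge with exactly the same sides as $\cL$, whence $\gamma \cL = \cL$. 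This produces the desired $\gamma$-invariant lozenge.

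The main obstacle — indeed essentially the only thing beyond quoting Theorem \ref{teo.AF} — is this last verification. Without the orientability assumption, $\gamma$ could a priori swap the two half leaves at a corner, or swap the two corners of $\cL$; in that case $\gamma$ would only preserve the chain as a set rather than fix an individual lozenge, and the conclusion could fail. The orientability standing assumption is precisely what rules this out, so I would be careful to flag exactly where it is used.
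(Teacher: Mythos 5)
Your proposal is correct and follows essentially the same route as the paper, which obtains the corollary precisely by feeding the freely homotopic orbits from item (2) of Theorem \ref{teo.AF} (equivalently, two distinct orbits in $\cO_\phi$ fixed by a common nontrivial $\gamma$) into the chain-of-lozenges statement of item (1). Your added verification that fixing the corners and their half leaves forces $\gamma \cL = \cL$ is a correct elaboration of a step the paper leaves implicit.
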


Note that a topological Anosov flow is a suspension if and only if the manifold has (virtually) solvable fundamental group \cite{Brunella}. 

An important property of lozenges that we will use is the following (see figures~\ref{fig2} and \ref{fig3}):

\begin{proposition}\label{p.lozenges}
Let $\cL \subset \cO_\phi$ be a lozenge in $\cO_\phi$ fixed by $\gamma \in \pi_1(M)$. Denote by $o_1, o_2$ the corners of $\cL$ which correspond to periodic orbits of $\phi_t$. Then, if $\gamma$ acts increasingly on $o_1$ then it acts decreasingly on $o_2$ and vice versa. 
\end{proposition}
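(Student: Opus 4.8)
The plan is to translate ``increasing/decreasing'' into a contraction/expansion statement in the orbit space, and then to exploit a single topological feature of a lozenge: the correspondence between its two unstable sides induced by the stable leaves crossing it reverses orientation.

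First I would record the dynamical dictionary. Since leaves of $\cF^{ws}$ are contracted in forward time and leaves of $\cF^{wu}$ are expanded in forward time, a deck transformation $\gamma$ fixing a corner $o$ and acting increasingly on it (that is, $\gamma x=\wphi(x)$ with $t>0$) must, in $\cO_\phi$, push the points of the unstable half-leaf of $o$ away from $o$ and pull the points of the stable half-leaf toward $o$; acting decreasingly reverses both. Moreover, because $\cF^{wu}(o)$ is a cylinder containing a \emph{unique} periodic orbit, $\gamma$ has no fixed point on $\cG^u(o)\setminus\{o\}$, and by the stated orientability assumption $\gamma$ fixes each half-leaf. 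Hence on the unstable side $A^u_i\cong(0,\infty)$, parametrized by distance from $o_i$, the induced map $\gamma_i$ is a fixed-point-free orientation-preserving homeomorphism, so either $\gamma_i(s)>s$ for all $s$ (the increasing case, expansion away from $o_i$) or $\gamma_i(s)<s$ for all $s$ (the decreasing case, contraction toward $o_i$).

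Next I would set up the correspondence between the two unstable sides directly from the lozenge axioms. A leaf of $\cG^s$ meets $A^u_1$ if and only if it meets $A^u_2$, and every such crossing leaf separates $o_1$ from $o_2$; since distinct leaves of $\cG^s$ are disjoint, the crossing leaves are nested between the two corners. This yields a homeomorphism $\tau:(0,\infty)\to(0,\infty)$ sending the $A^u_1$-intersection of a crossing leaf to its $A^u_2$-intersection. The geometric point is that $\tau$ is \emph{orientation-reversing}: if a crossing leaf $L$ is nearer to $o_1$ than another crossing leaf $L'$, then along $A^u_1$ (issuing from $o_1$) the leaf $L$ is met before $L'$, so $L$ has the smaller $A^u_1$-parameter, whereas along $A^u_2$ (issuing from $o_2$) the leaf $L$ is met after $L'$, so $L$ has the larger $A^u_2$-parameter. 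Thus increasing the $A^u_1$-parameter decreases the $A^u_2$-parameter, i.e.\ $\tau$ is decreasing.

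Finally I would combine the two ingredients. Because $\gamma$ preserves $\cG^s$ and fixes $\cL$, it permutes the crossing stable leaves, which gives the conjugacy $\gamma_2\circ\tau=\tau\circ\gamma_1$, that is $\gamma_2=\tau\,\gamma_1\,\tau^{-1}$. Since $\tau$ reverses orientation, conjugation by it interchanges ``$>\mathrm{id}$'' and ``$<\mathrm{id}$'': if $\gamma_1(s)>s$ for all $s$, then $\tau(\gamma_1(s))<\tau(s)$, i.e.\ $\gamma_2(\tau(s))<\tau(s)$ for all $s$, and as $\tau$ is onto this means $\gamma_2(t)<t$ for all $t$. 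Via the dictionary of the first paragraph this says precisely that $\gamma$ increasing on $o_1$ forces $\gamma$ decreasing on $o_2$, and the symmetric computation gives the converse. I expect the only delicate step to be the justification that $\tau$ reverses orientation; once the nesting/separation of the crossing leaves is pinned down, everything else is a formal monotonicity argument.
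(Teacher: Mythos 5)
Your overall architecture is actually the same as the paper's: a ``dictionary'' translating increasing/decreasing on a fixed orbit into expansion/contraction along its half-leaves in the orbit space, combined with the observation that the correspondence induced across the lozenge by crossing leaves reverses the distance-from-corner order, and a conjugacy to conclude. (You run it with crossing stable leaves between the unstable sides; the paper runs it with crossing unstable leaves between the stable sides --- that difference is immaterial.) The genuine problem is that your dictionary is backwards, and the justification you give for it is precisely the trap the paper flags as ``counterintuitive''. The correct statement is: if $\gamma$ acts increasingly on $o$, then in $\cO_\phi$ it \emph{expands} along $\cG^s(o)$ and \emph{contracts} along $\cG^u(o)$. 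Your inference ``forward flow contracts $\cF^{ws}$, hence $\gamma$ pulls the stable half-leaf toward $o$'' conflates the flow with the deck transformation: the flow acts \emph{trivially} on the orbit space, so forward-time contraction of stable leaves says nothing directly about how $\gamma$ moves orbits. What is true is that $\gamma$ acts as an isometry of $\mt$ sending $x \in o$ to $\wphi(x)$ for some $t>0$; combining this with the forward convergence of orbits in a stable leaf (the paper's argument), or noting that the orbit-space action of $\gamma$ near $o$ is induced by the \emph{inverse} of the Poincar\'e return map of the periodic orbit, yields expansion on $\cG^s(o)$. A concrete check: for the geodesic flow of a hyperbolic surface, $\gamma$ is a hyperbolic isometry of $\HH^2$ with attracting and repelling fixed points $\gamma^{+}, \gamma^{-}$ at infinity; the orbit space of the stable leaf of $o$ is identified with $\partial \HH^2 \setminus \{\gamma^{+}\}$ via backward endpoints, on which $\gamma$ fixes only $\gamma^{-}$ (the point corresponding to $o$) and pushes every other point away from it.

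Curiously, your final conclusion survives this error: since you apply the (wrong) dictionary consistently at both corners, and the crossing correspondence $\tau$ reverses order, the two sign errors cancel --- with the correct dictionary the computation reads ``increasing on $o_1$ $\Rightarrow$ $\gamma_1 < \mathrm{id}$ on $A^u_1$ $\Rightarrow$ $\gamma_2 > \mathrm{id}$ on $A^u_2$ $\Rightarrow$ decreasing on $o_2$'', giving the same output. Nevertheless, as written the proof rests on a false intermediate claim supported by an invalid argument, so it is not correct. The fix is localized: replace your first paragraph by the isometry/return-map argument establishing the correct dictionary (this is exactly where the paper spends most of its effort); your second and third paragraphs --- the order-reversing correspondence $\tau$ between the sides, the conjugacy $\gamma_2 = \tau\,\gamma_1\,\tau^{-1}$, and the formal sign flip --- are sound and go through verbatim with the roles of expansion and contraction exchanged.
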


\begin{proof}
Note that if $\gamma$ acts increasingly on $o_1$ then $\gamma$ expands
points in $\cG^s(o_1)$ and contracts points in $\cG^u(o_1)$. 
This is counterintuitive so we explain for the action on $\cG^s(o_1)$.
Let $L$ be stable leaf of $\wphi$ which projects to $\cG^s(o_1)$ in
$\cO_\phi$.
Then $\gamma$ acts as an isometry in $L$ and sends a point
$x$ in $o_1$ to $\wphi(x)$ where $t > 0$. Let $y$ be in
$L$ in a nearby orbit at distance $d_0$ from $x$.
Then $\gamma(y)$ is at distance $d_0$ from $\wphi(x)$ 
since $\gamma$ acts as an isometry on $L$.
But the orbit through $y$ is closer to $\wphi(x)$ than $y$ is
to $x$ because orbits in a stable leaf converge together.
So the orbit through $\gamma(y)$ is farther from $o_1$ than
the orbit through $y$ is, that is $\gamma$ acts as an expansion
on the set of orbits in $L$.

Let $A^s_i$ be the half leaves of
$\cG^s(o_i)$ which are in the boundary of the lozenge,
and $A^u_i$ the half leaves of $\cG^u(o_i)$ which are in the 
boundary of the lozenge.
We proved that $\gamma$ acts in an exanding manner
in $\cG^s(o_1)$. Let $q$ in $\cG^s(o_1)$. Then $\cG^u(q)$ intersects
$A^s_2$ and separates $o_1$ from $o_2$ by the lozenge property.
This implies that $\gamma$ contracts points in $\cG^s(o_2)$.

Similarly $\gamma$ contracts points in $\cG^u(o_1)$ and expands 
points in $\cG^u(o_2)$.
\end{proof}

\begin{figure}[ht]
\begin{center}
\includegraphics[scale=0.69]{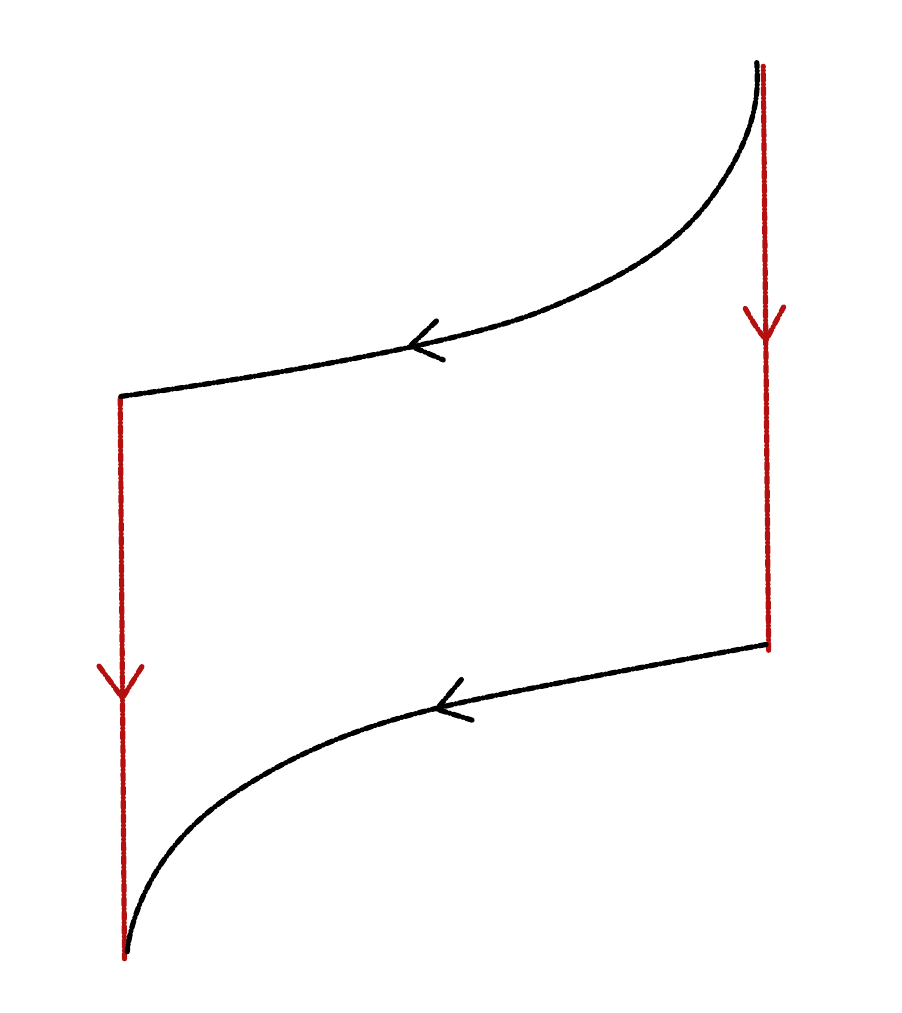}
\begin{picture}(0,0)
\put(-180,131){$o_1=\gamma o_1$}
\put(-25,70){$o_2=\gamma o_2$}
\put(-176,52){$A_1^u$}
\put(-129,42){$A_2^s$}
\put(-83,145){$A_1^s$}
\put(-24,135){$A_2^u$}
\end{picture}
\end{center}
\vspace{-0.5cm}
\caption{{\small A periodic lozenge fixed by a non trivial deck transformation
$\gamma$ which also fixes the corners $o_1, o_2$ of the lozenge.
The arrows indicate the action of $\gamma$ on the stable and
unstable leaves of $o_i$. Notice again that this figure depicts
the situation in the orbit space which is two dimensional.
The figure depicts the situation that $\gamma(x) = \wwp_t(x)$
with $t < 0$ for $x$ in $o_1$.}}\label{fig2}
\end{figure}

We now clarify what we mean by the flow not being 
a geometric product. 
Consider the setup as in Proposition \ref{p.lozenges}.
Let $\alpha_i = \pi(o_i)$ be the corresponding periodic
orbits of $\phi_t$ in $M$ ($\pi: \mt \rightarrow M$ is
the universal covering map).
Proposition \ref{p.lozenges} implies that 
$\alpha_1, \alpha_2$ are freely homotopic to the inverses 
of each other.
In particular this implies that 
a positive ray of $o_1$ is a bounded
Hausdorff distance in $\mt$ from a \emph{negative} ray of 
$o_2$ and vice versa. Also a positive ray of 
$o_1$ is not a bounded distance from a positive
ray of $o_2$. 
Hence the flow $\wwp$ in $\mt$ is very far from being
a geometric product.
The prototype flow for this behavior is the geodesic flow on
a closed,  orientable hyperbolic surface. Here every periodic
orbit is freely homotopic to the inverse of another periodic
orbit (the same geodesic traversed in the opposite direction).

\begin{figure}[ht]
\begin{center}
\includegraphics[scale=0.48]{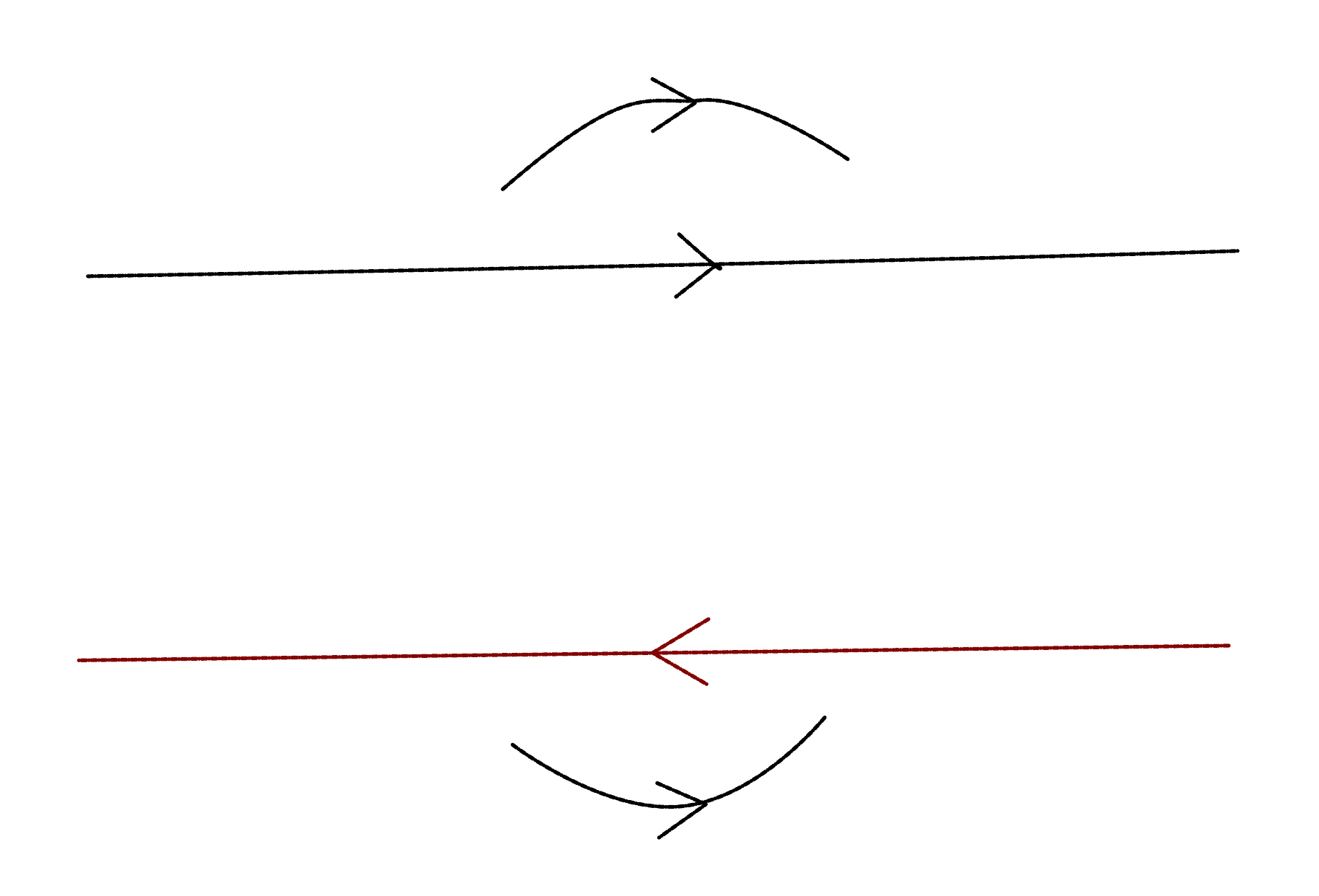}
\begin{picture}(0,0)
\put(-233,113){$o_1$}
\put(-53,55){$o_2$}
\put(-110,12){$\gamma$}
\put(-113,155){$\gamma$}
\end{picture}
\end{center}
\vspace{-0.5cm}
\caption{{\small Lifts $o_1, o_2$ of periodic orbits $\alpha_1, \alpha_2$ which
are freely homotopic to the inverses of each other. The
arrows in the flow lines 
indicate the positive flow direction in each
orbit. The action of $\gamma$ in each orbit is also
indicated. This figure
is supposed to be $3$-dimensional in $\mt$ and geometrically
correct in that positive rays of $o_1$ are a bounded distance
from negative rays of $o_2$ and so on. Hence geometrically the
flow is very far from being a product.}}\label{fig3}
\end{figure}

A \emph{self orbit equivalence} of an Anosov flow $\phi_t$ is a homeomorphism $\beta: M \to M$ sending orbits of $\phi_t$ to orbits of $\phi_t$ and preserving their orientation. See \cite{BaG,BFP} for more information on them.  
A self orbit equivalence is \emph{trivial} if it leaves
invariant every orbit.

We show the following proposition
about self orbit equivalences that will be an important technical
ingredient to prove our main result.
The proof and statement will assume some familiarity with the theory of Anosov flows (we note that very similar arguments can be found in \cite{BaG}; we include the proof since this is not stated explicitly). 

The proposition refers to $\RR$-covered Anosov flows. This means that
the leaf space of the stable foliation of the flow lifted to $\mt$
is homeomorphic to the reals. In case $\Phi$ is $\RR$-covered
and not conjugate to a suspension it has what is called
{\em skewed} type. The skewed type has a well defined structure,
in particular every orbit (periodic or not) is one corner of
a bi-infinite chain of lozenges. If the orbit is periodic then
there is a $\ZZ^2$ subgroup leaving this bi-infinite chain
invariant.  Any such flow has what is called a {\em one step up
map}: it is self orbit equivalence of the flow $\Phi$
which is homotopic to the identity and has a lift to $\mt$
 which induces a shift by one in any of these bi-infinite chains.
We refer the reader to \cite{FenleyAnosov} and
\cite{Barthelme} for details on $\RR$-covered Anosov flows,
including the one step up map.

\begin{proposition}\label{p.periodiclozenge}
Let $\beta: M \to M$ be a self orbit equivalence of a topological Anosov flow $\phi_t: M \to M$ on a manifold with non-virtually solvable fundamental group. Then, one of the following three possibilities happens:
\begin{enumerate}
\item there is a lozenge $\cL$ fixed by some non trivial
deck transformation $\gamma$ and there is 
a lift $\hat \beta$ of an iterate of $\beta$ such that $\hat \beta(\cL)=\cL$ 
\item $M$ is hyperbolic, $\phi_t$ is $\RR$-covered and $\beta$ is 
a non trivial  power of an one step up map, or
\item $M$ is Seifert and $\beta$ acts in the base as a pseudo-Anosov. 
\end{enumerate}
\end{proposition}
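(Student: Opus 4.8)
The plan is to analyze the self orbit equivalence $\beta$ through its action on the orbit space $\cO_\phi \cong \RR^2$, distinguishing cases by the large-scale topology of $M$. The key invariant is the set of periodic orbits, which $\beta$ permutes; since $M$ has non-virtually-solvable fundamental group, $\phi_t$ is not a suspension (by the remark following Corollary \ref{coro.AF}), so by Theorem \ref{teo.AF}(2) there exist freely homotopic periodic orbits, and hence by Corollary \ref{coro.AF} there is at least one lozenge $\cL_0$ fixed by a non trivial $\gamma_0 \in \pi_1(M)$. The strategy is to show that unless we are in the special $\RR$-covered or Seifert situations, some iterate of (a lift of) $\beta$ must fix such a lozenge.

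First I would set up the action. Lifting $\beta$ to $\wt\beta : \mt \to \mt$ induces a homeomorphism $\bar\beta$ of $\cO_\phi$ preserving the foliations $\cG^s, \cG^u$ (since $\beta$ preserves orbits and their orientation, and the stable/unstable foliations are dynamically defined). This $\bar\beta$ permutes corners of lozenges and sends chains of lozenges to chains of lozenges. Because $\beta$ normalizes $\pi_1(M)$ acting on $\mt$ (up to the choice of lift), conjugation by $\wt\beta$ carries the stabilizer of a lozenge to the stabilizer of its image; concretely, if $\gamma$ fixes $\cL$ then $\wt\beta \gamma \wt\beta^{-1}$ fixes $\bar\beta(\cL)$. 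The goal in case (i) is to produce a single lozenge whose $\pi_1$-stabilizer is nontrivial and which is simultaneously invariant under an iterate of a lift of $\beta$.

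Next I would exploit the rigidity in Theorem \ref{teo.AF}: each $\ZZ^2$ subgroup is associated to at most two bi-infinite minimal chains of lozenges, and such chains are canonical. The map $\bar\beta$ sends the chain through $\cL_0$ to another such canonical chain. The main dichotomy is whether the collection of these canonical chains is finite up to the action involved, or whether $\beta$ shuffles infinitely many of them. When $\phi_t$ is \emph{not} $\RR$-covered, the periodic lozenges (those with nontrivial stabilizer) form a structured but ``rigid'' family; I expect that $\bar\beta$ permutes the finitely many $\pi_1$-conjugacy classes of periodic lozenges, so a suitable iterate $\beta^k$ fixes one such conjugacy class, and after adjusting the lift we obtain $\hat\beta(\cL) = \cL$ with $\cL$ still fixed by a nontrivial $\gamma$, giving case (i). The genuinely different behavior occurs precisely in the $\RR$-covered skewed case, where the one step up map shifts every bi-infinite chain by one and therefore fixes \emph{no} lozenge; here one invokes the classification of $\RR$-covered flows and their self orbit equivalences to land in case (ii) (with $M$ hyperbolic) or, when the Seifert fibered structure forces a nontrivial action on the base orbifold, in case (iii) via the induced pseudo-Anosov map.

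The hard part will be the last dichotomy: ruling out that $\beta$ permutes the periodic lozenges with no periodic orbit (i.e. no finite cycle) while $\phi_t$ is \emph{not} $\RR$-covered, and correctly matching the surviving possibilities to the manifold type. This requires knowing that in the non-$\RR$-covered case the action of $\bar\beta$ on the set of periodic-lozenge chains has a finite orbit — which should follow from the fact that, outside the $\RR$-covered world, lozenge chains are finite (Theorem \ref{teo.AF}(1) gives \emph{finite} chains between $\gamma$-fixed orbits) and the associated $\ZZ^2$ data is constrained — together with an argument that an infinite-shift behavior can only come from a skewed $\RR$-covered structure and hence forces $\beta$ to be a power of the one step up map. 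Disentangling the Seifert case, where $\beta$ may act trivially on the orbit space of $\phi_t$ yet nontrivially on the Seifert base, is where the analysis is most delicate and where familiarity with the structure theory of Anosov flows in Seifert and hyperbolic manifolds is essential.
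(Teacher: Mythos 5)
There is a genuine gap, and it sits exactly where you placed it: the ``hard part'' you defer is not a technicality but the main content of the proposition, and the finiteness claim you hope to use is both unjustified and based on a misreading of Theorem \ref{teo.AF}. You assert that outside the $\RR$-covered world lozenge chains are finite, citing Theorem \ref{teo.AF}(1); but that item only says that two orbits fixed by the \emph{same} $\gamma$ are joined by a finite chain, while the last statement of the same theorem attaches \emph{bi-infinite} minimal chains to every $\ZZ^2$ subgroup of $\pi_1(M)$ --- and such subgroups occur precisely in the toroidal (in particular non-$\RR$-covered) case. You also assert that the periodic lozenges form finitely many $\pi_1$-conjugacy classes, so that some iterate of your induced map $\bar\beta$ fixes a class; nothing in the quoted results bounds this number, and non-$\RR$-covered Anosov flows can have infinitely many free homotopy classes of periodic orbits, each spanning lozenges, so there is no finite set for $\bar\beta$ to permute. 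Finally, even granting a fixed conjugacy class, passing from ``the class is fixed'' to ``some lift of some iterate fixes an actual lozenge'' still needs an argument, which you do not supply.

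The paper's proof is organized by the geometric type of $M$ rather than by whether $\phi_t$ is $\RR$-covered, and this is what makes case (i) provable without any finiteness of lozenge classes. If $M$ has a nontrivial JSJ decomposition, an iterate of $\beta$ fixes each JSJ torus up to homotopy; the associated $\ZZ^2$ leaves invariant a canonical minimal bi-infinite chain $\cC$ (the ``at most two chains'' statement of Theorem \ref{teo.AF}), so some lift $\hat\beta_0$ of that iterate preserves $\cC$. If $\hat\beta_0$ fixes no lozenge of $\cC$, it translates the chain; but the $\ZZ^2$ contains an element $\gamma_1$ that also translates $\cC$, so a composition $\hat\beta_0^i\gamma_1^j$ with $i\neq 0$ --- still a lift of a nontrivial iterate of $\beta$ --- fixes a lozenge of $\cC$, and that lozenge is fixed by infinitely many elements of the $\ZZ^2$. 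This correction of the lift by a deck translation along the chain is the key idea missing from your proposal. In the hyperbolic case the paper first uses finiteness of the mapping class group to make an iterate of $\beta$ homotopic to the identity; if some power of $\beta$ is trivial one gets case (i) directly from Corollary \ref{coro.AF}, and otherwise \cite{BaG} yields that $\phi_t$ is $\RR$-covered and $\beta$ is a power of the one step up map (case (ii)). In the Seifert case the flow is a lifted geodesic flow by \cite{Brunella}, every lifted periodic orbit is a corner of a lozenge, and a non-pseudo-Anosov action on the base forces one of these lozenges to be periodic, giving (i) or (iii). Your proposal gestures at the last two cases but conflates the trichotomy with the $\RR$-covered dichotomy, and for the toroidal case it replaces the paper's concrete mechanism with an unproven (and false in general) finiteness statement.
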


\begin{proof} 
If a 3-manifold $M$ admits an Anosov flow then it is irreducible
and the universal cover is homeomorphic to $\RR^3$.
It follows that  the manifold is either hyperbolic, Seifert fibered 
or it has a non-trivial JSJ decomposition (see e.g. \cite{Barthelme}).  

If $M$ has a non-trivial JSJ decomposition, then there is some iterate of $\beta$ that fixes all tori of the decomposition up to homotopy. Each torus 
in the torus decomposition is associated with a 
${\ZZ^2}$ subgroup of $\pi_1(M)$ which leaves 
invariant a minimal chain of lozenges $\cC$
(cf. Theorem \ref{teo.AF}). Let $\cL$ be a lozenge in $\cC$, which
has to be fixed by infinitely many $\gamma$ in the $\ZZ^2$ subgroup
associated with the torus.
Since an iterate of $\beta$ leaves the torus invariant up
to homotopy, this implies that 
 some lift $\hat \beta_0$ of an iterate of $\beta$
leaves invariant $\cC$. If $\beta_0$ does not fix a lozenge
in $\cC$ then it acts as a translation on the set of lozenges
in $\cC$. A $\ZZ^2$ subgroup leaving $\cC$ invariant
has a non trivial element $\gamma_1$
acting as translation on $\cC$, hence some iterate
$\hat \beta^i_0 \gamma^j_1$ with $i$ not zero fixes $\cL$,
and this is a lift $\hat \beta$ of a non trivial iterate
of $\beta$.

This shows that either the first possibility holds
or the manifold is hyperbolic or Seifert fibered. 

If $M$ is hyperbolic then up to an iterate we can assume that $\beta$ is homotopic to the identity. 
If $\beta$ has a finite iterate which is trivial, then the finite
iterate has a lift fixing all orbits in $\mt$. Since $M$ is hyperbolic
then $\phi_t$ is not conjugate to a suspension Anosov flow.
Then the first possibility of the proposition is satisfied automatically
because of item (ii) of Theorem \ref{teo.AF}.
If no power of $\beta$ is trivial, then
 in \cite{BaG} it is shown that $\phi_t$ must be $\RR$-covered and $\beta$ is 
a power of a one step up map. 


If $M$ is Seifert fibered, then the flow is topologically equivalent to a lift of a geodesic flow in a hyperbolic orbifold \cite{Brunella}, in particular, every periodic orbit corresponds to a closed curve in the base and all periodic orbits lifted to the universal cover are the corner
of some lozenge. If the action in the base of $\beta$ is not pseudo-Anosov, then one of these lozenges will be periodic.  

This finishes the proof of the proposition.
\end{proof} 

\subsection{Partially hyperbolic diffeomorphisms} 
A diffeomorphism $f: M^3 \to M^3$ is said to be partially hyperbolic if there exists a continuous $Df$-invariant splitting $TM = E^s \oplus E^c \oplus E^u$ into one dimensional bundles such that there exists $\ell>0$ so that for every $x \in M$ and $v^\sigma$ unit vectors in $E^\sigma_x$ ($\sigma=s,c,u$) we have: 

$$ \|Df^\ell v^s \| < \min \{1, \|Df^\ell v^c\| \} \leq \max  \{1, \|Df^\ell v^c\| \} < \|Df^\ell v^u \|. $$

It is well known that $E^s$ and $E^u$ integrate uniquely into $f$-invariant foliations $\cW^s$ and $\cW^u$ (see e.g. \cite{HP-survey}). In general, the center foliation does not integrate into a foliation, though one dimensionality allows to get some structures that we will not use explicitely here \cite{BI}. 

\begin{definition}
We say that $f$ is \emph{accessible} if for every $x,y\in M$ there exists a (piecewise smooth) curve tangent to $E^s \cup E^u$ joining $x$ to $y$.  
\end{definition}

There is a strong link between accessibility and ergodicity suggested by the
Pugh-Shub conjectures \cite{PS}. In our setting the following result is true (see \cite{BurnsWilkinson} for stronger results in higher dimensions that also imply the following): 

\begin{theorem}[Burns-Wilkinson]\label{teo.BW}
If $f: M \to M$ is a volume preserving partially hyperbolic diffeomorphism in a closed 3-manifold $M$ which is accessible and of class $C^{1+}$. Then, $f$ is a K-system, in particular it is ergodic and mixing.  
\end{theorem}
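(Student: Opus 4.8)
The statement is the Burns--Wilkinson criterion quoted from \cite{BurnsWilkinson}, specialized to three dimensions; since it is cited rather than reproved here, I only indicate the strategy I would follow. The plan is to run the \emph{Hopf argument}. Fix a continuous observable $\phi:M\to\R$. As $f$ preserves volume, Birkhoff's theorem gives that the forward and backward time averages $\phi^{+}$ and $\phi^{-}$ exist Lebesgue-almost everywhere and coincide almost everywhere; write $\bar\phi$ for their common value. By construction $\phi^{+}$ is constant along each leaf of $\cW^{s}$ and $\phi^{-}$ is constant along each leaf of $\cW^{u}$, and the absolute continuity of the strong foliations (available in class $C^{1+}$) transfers these leafwise invariances into statements about $\bar\phi$ valid along almost every strong leaf. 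If one could upgrade this to ``$\bar\phi$ essentially constant along every $su$-path'', then accessibility --- here the whole of $M$ is a single accessibility class --- would force $\bar\phi$ to be globally essentially constant, and ergodicity would follow from the density of continuous functions in $L^{2}$.

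The difficulty, which is the technical heart of \cite{BurnsWilkinson}, is that ``constant almost everywhere along $\cW^{s}$'' and ``constant almost everywhere along $\cW^{u}$'' do not compose along an accessibility sequence: at each leg of the path one discards a null set, and these exceptional sets may accumulate so as to block the naive propagation, precisely because of the uncontrolled center direction. The resolution I would follow is to replace the genuine strong foliations near a point by the locally $f$-invariant \emph{fake} foliations and their center-stable and center-unstable amalgams on small balls, and then to run a Lebesgue density point argument using \emph{juliennes}: nested families of dynamically rescaled neighborhoods whose shape is adapted to the splitting. The juliennes are designed so that a set essentially saturated by stable and unstable holonomy has density $0$ or $1$ at each density point, and the center-bunching hypothesis is what guarantees that the holonomies between fake center-stable (resp. center-unstable) leaves are absolutely continuous with Jacobians controlled enough for the density computation to close. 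One then argues that an essentially $su$-invariant set is essentially open and closed in the accessibility topology, so that single accessibility yields triviality.

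A decisive simplification in our dimension is that $E^{c}$ is one dimensional: $Df$ restricted to $E^{c}$ is multiplication by a scalar, so $\|Df|_{E^{c}}\|=m(Df|_{E^{c}})$, where $m$ denotes the conorm. Consequently the center-bunching inequalities of \cite{BurnsWilkinson}, which weigh the stable and unstable rates against the ratio $\|Df|_{E^{c}}\|/m(Df|_{E^{c}})=1$, reduce to the defining partial hyperbolicity estimates and hold automatically for any $C^{1+}$ system. Thus no bunching hypothesis beyond $C^{1+}$ is needed and the criterion applies verbatim. For the stronger $K$-property I would apply the same julienne density machinery not to invariant functions but to the Pinsker $\sigma$-algebra, checking that any measurable partition subordinate to $\cW^{u}$ (resp. $\cW^{s}$) on which $f$ acts with zero entropy is essentially trivial; this forces the Pinsker factor to be trivial, and mixing and ergodicity follow formally.

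The main obstacle is the measure-theoretic core just described: making the stable and unstable almost-everywhere invariances interact coherently with the center direction along accessibility sequences. This is exactly where the juliennes and center bunching are indispensable, and the one-dimensionality of $E^{c}$ is what renders the bunching free, thereby reducing ``accessible and $C^{1+}$'' to the $K$-property.
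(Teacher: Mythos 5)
Your proposal matches the paper's treatment: the paper quotes this theorem from \cite{BurnsWilkinson} without proof, and your sketch accurately reproduces the strategy of that reference (Hopf argument, fake foliations, julienne density points, triviality of the Pinsker algebra for the K-property), including the correct observation that one-dimensionality of $E^c$ makes the center-bunching hypothesis automatic. The only caveat worth recording is attributional: the published Burns--Wilkinson theorem is stated for $C^2$ diffeomorphisms, and the $C^{1+}$ case with one-dimensional center used here is precisely the theorem of Rodriguez Hertz--Rodriguez Hertz--Ures \cite{HHU-Ergo}, which the paper also cites.
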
 

Recall that a measure preserving system is a \emph{K-system} if every non-trivial finite partition has positive metric entropy, this implies the system is ergodic (i.e. invariant sets have zero or full measure). See \cite{BurnsWilkinson,CHHU} for more information. 

Lack of accessibility in dimension 3 allows to produce a strong structure that is what we will analyse here (see \cite{HHU-Ergo,HHU-dim3,CHHU}): 

\begin{theorem}[Hertz-Hertz-Ures]\label{teo-HHU}
Let $f: M \to M$ be a partially hyperbolic diffeomorphism on a closed 3-manifold with non-solvable fundamental group whose non-wandering set is all of $M$ and such that $f$ is not accessible. Then, there exists a lamination $\Lambda^{su}$ tangent to $E^s \oplus E^u$ such that: 
\begin{itemize}
\item  $\Lambda$ does not have closed leaves, 
\item the closure of the
complementary regions of $\Lambda^{su}$ (if existing) are $I$-bundles where $E^c$ is uniquely integrable and such that center curves form the $I$-bundle structure. 
\end{itemize}
\end{theorem}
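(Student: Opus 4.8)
The plan is to analyse the partition of $M$ into accessibility classes and to show that failure of accessibility forces these classes to assemble into the desired lamination. For $x\in M$ write $\mathrm{AC}(x)$ for the accessibility class of $x$, the set of points reachable from $x$ by a piecewise smooth path tangent to $E^s\cup E^u$. Because $f$ preserves $\cW^s$ and $\cW^u$, it permutes accessibility classes, and each class is saturated by both strong foliations. First I would invoke the standard dichotomy for accessibility classes (Didier, Hertz--Hertz--Ures): every accessibility class is either open, or a $C^1$ injectively immersed surface tangent to $E^s\oplus E^u$. If all classes were open they would partition the connected manifold $M$ into disjoint open sets, hence there would be a single class and $f$ would be accessible, contrary to hypothesis. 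Therefore the set $\Gamma$ of points lying in non-open classes is nonempty.

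Next I would upgrade $\Gamma$ to a lamination $\Lambda^{su}$ whose leaves are the non-open classes. Its complement is the union of the open classes and is therefore open, so $\Gamma$ is closed and $f$-invariant. Using the local product structure of $\cW^s$ and $\cW^u$, inside a small foliation box $\Gamma$ appears as a closed family of plaques tangent to $E^s\oplus E^u$; continuity of the strong bundles together with the surface structure of the individual classes provides lamination charts, yielding an su-saturated lamination.

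To extract the stated properties I would replace $\Lambda^{su}$ by a minimal sublamination and examine its leaves. A compact leaf $S$ would carry the two transverse nonsingular one-dimensional foliations $\cW^s|_S$ and $\cW^u|_S$, so by Poincar\'e--Hopf $\chi(S)=0$ and, under the orientability conventions, $S$ is a torus; this is precisely an $su$-torus, the configuration excluded in Theorem~\ref{teo.main}, so in the remaining case the minimal lamination has no closed leaves. Finally, for a complementary region $U$ (a component of $M\setminus\Lambda^{su}$) one shows that $E^c$ is uniquely integrable on $\overline{U}$ and that a center curve leaving one boundary leaf reaches the opposite one without returning: since each boundary leaf is an entire accessibility class there is no way for a center arc to close up or to spiral inside $\overline{U}$, and the resulting product picture is the $I$-bundle whose fibers are the center arcs. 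The non-wandering hypothesis is what feeds the recurrence needed to control these center curves uniformly across $U$.

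The main obstacle is this last step. Producing the lamination and ruling out closed leaves is relatively soft, but promoting the information that $\Lambda^{su}$ is a minimal su-saturated lamination with noncompact leaves into the rigid conclusion that every complementary region is a genuine $I$-bundle foliated by center arcs requires real control of the center bundle: one must establish unique integrability of $E^c$ on $\overline{U}$ and exclude center curves that fail to connect the two boundary leaves monotonically. It is here that the interaction between the dynamics, the non-wandering assumption, and the fine structure of accessibility classes does the essential work.
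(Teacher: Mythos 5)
A preliminary remark: the paper does not prove this statement at all --- it is quoted as background from \cite{HHU-Ergo,HHU-dim3,CHHU} (with further discussion referenced in \cite[\S 2.3]{FP}) --- so your attempt can only be measured against the Hertz--Hertz--Ures proof it paraphrases. Your outline does follow that scheme: the dichotomy that an accessibility class is open or an immersed $su$-surface, closedness of the union of non-open classes, lamination charts from the local product structure, and Poincar\'e--Hopf to see that a compact leaf must be an $su$-torus. But the two steps that carry the actual content are broken or missing. First, your exclusion of compact leaves is circular: you discard an $su$-torus because it is ``the configuration excluded in Theorem~\ref{teo.main}'', but Theorem~\ref{teo.main} is the main theorem of this paper, and its proof invokes precisely the statement you are trying to prove (it is Theorem~\ref{teo-HHU} that produces the lamination $\Lambda^{su}$ on which the whole argument of the paper runs). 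What is actually needed here is an independent and substantial result of Hertz--Hertz--Ures on tori tangent to $E^s\oplus E^u$: a closed $3$-manifold admitting a partially hyperbolic diffeomorphism with an embedded $su$-torus has (virtually) solvable fundamental group (see \cite{CHHU}). That theorem is the only place where the non-solvability hypothesis enters, and your proposal never genuinely uses that hypothesis. Note also that passing to a minimal sublamination does not help: a compact leaf is by itself a minimal sublamination, so minimality cannot exclude closed leaves, and it weakens the complementary-region claim, since a complementary region of a minimal sublamination may contain further non-open accessibility classes in its interior.

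Second, the $I$-bundle structure of the complementary regions --- unique integrability of $E^c$ there and the fact that center arcs run monotonically from one boundary leaf to the other --- is asserted rather than proved: you write that ``one shows'' this and then candidly identify it as ``the main obstacle''. That obstacle \emph{is} the theorem. It is exactly where the non-wandering hypothesis does its work in the Hertz--Hertz--Ures argument (for instance, to control the boundary leaves of a complementary region and to rule out wandering collar regions), and soft reasoning about center arcs ``not spiraling'' inside $\overline{U}$ is not a substitute for it. So as it stands, the proposal reproduces the soft portion of the known proof, while the two hypotheses of the statement (non-solvable fundamental group and non-wandering) end up doing no verified work: one is used circularly, the other not at all.
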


See \cite[\S 2.3]{FP} for more discussion on this result. The case where the fundamental group is solvable is related with the existence of $su$-tori, see \cite{CHHU}.  

We will use the following easy property of partially hyperbolic diffeomorphisms that follows directly from uniform transversality of the bundles after iteration (see e.g. \cite[Proposition 4.2]{HP-survey}): 

\begin{proposition}\label{p.nocircles}
There are no closed curves tangent to $E^s$ or $E^u$. 
\end{proposition}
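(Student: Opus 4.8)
The plan is to argue by contradiction, using only that $f^{\ell}$ contracts $E^s$ uniformly together with a local transversality (sign) argument; the unstable case will then follow by applying the same reasoning to $f^{-1}$.

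Suppose $\gamma \colon S^1 \to M$ is a closed $C^1$ curve tangent to $E^s$, meaning $\gamma'(t) \in E^s_{\gamma(t)} \setminus \{0\}$ for all $t$, and set $L = \ell(\gamma) > 0$. Since the defining inequality gives $\|Df^{\ell} v^s\| < 1$ for every unit $v^s \in E^s$, compactness of $M$ and continuity yield $\lambda \in (0,1)$ with $\|Df^{\ell}|_{E^s}\| \le \lambda$, hence $\|Df^{n\ell}|_{E^s}\| \le \lambda^{n}$. Because $Df$ preserves $E^s$ and restricts to a linear isomorphism on each fiber, $\gamma_n := f^{n\ell}\circ \gamma$ is again a closed $C^1$ curve tangent to $E^s$, and $\ell(\gamma_n) = \int_{S^1} \|Df^{n\ell}\gamma'(t)\|\,dt \le \lambda^{n} L \to 0$. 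Thus for $n$ large the curve $\gamma_n$ is contained in an arbitrarily small ball.

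Fix such an $n$ so that $\gamma_n$ lies in a geodesic ball $B$ of radius smaller than the injectivity radius, and small enough that, in exponential coordinates centered at some $p \in B$, the line field $E^s$ makes angle less than a fixed $\epsilon < \pi/4$ with the fixed direction $E^s_p$ throughout $B$; here I use the uniform continuity of $E^s$ and its transversality to $E^c \oplus E^u$. Choose linear coordinates $(x_1,x_2,x_3)$ on $B$ with the $x_1$-axis along $E^s_p$. Orientability of $E^s$ lets me pick a continuous unit section $X^s$ of $E^s$ over $B$ whose first coordinate $X^s_1$ is positive everywhere (possible since $E^s$ stays within angle $\epsilon$ of the $x_1$-axis). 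Writing $\gamma_n'(t) = a(t)\,X^s(\gamma_n(t))$ with $a$ continuous and nowhere zero on the connected set $S^1$, the sign of $a$ is constant, say $a > 0$. Then $\tfrac{d}{dt}\big(x_1 \circ \gamma_n\big)(t) = a(t)\,X^s_1(\gamma_n(t)) > 0$ for all $t$, so $x_1 \circ \gamma_n$ is strictly increasing; this contradicts $\gamma_n(0) = \gamma_n(1)$. Hence no closed curve tangent to $E^s$ exists.

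For $E^u$ one repeats the argument with $f^{-1}$ in place of $f$, since $f^{-1}$ contracts $E^u$ uniformly. The main obstacle is the reduction to a single small chart: one must first use the uniform contraction to shrink the curve, and then exploit both the continuity of $E^s$ and the orientability assumption to fix the sign of the $x_1$-component of the tangent field; once $\gamma_n$ lies in a chart where $E^s$ is graphical over the $x_1$-axis with a definite orientation, monotonicity of $x_1 \circ \gamma_n$ is immediate and closedness is violated. Alternatively, one could bypass the iteration by recalling that the leaves of $\cW^s$ are injectively immersed copies of $\RR$, so a curve tangent to $E^s$ lies in a single leaf and its $\RR$-coordinate would attain an interior maximum with vanishing derivative, again contradicting $\gamma' \neq 0$; I prefer the contraction argument since it uses only the infinitesimal data and matches the phrase ``uniform transversality after iteration.''
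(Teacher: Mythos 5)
Your proof is correct and is essentially the argument the paper intends: the paper does not prove Proposition \ref{p.nocircles} itself but cites \cite[Proposition 4.2]{HP-survey}, describing the argument as ``uniform transversality of the bundles after iteration,'' which is precisely your scheme of shrinking the closed curve by forward iteration under the uniform contraction of $E^s$ and then contradicting closedness via monotonicity of a coordinate in a small chart where $E^s$ is nearly constant (and the same for $E^u$ using $f^{-1}$). One small caution: the alternative you sketch at the end is circular, since knowing that leaves of $\cW^s$ are lines rather than circles is essentially the statement being proved, but you rightly do not rely on it.
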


A generalization will be given in Proposition \ref{p.curvesgen} below. 

\subsection{Collapsed Anosov flows} 
We introduce here the notion of collapsed Anosov flows from \cite{BFP}. As mentioned earlier, it corresponds to an open and closed class of partially hyperbolic diffeomorphisms (see \cite[Theorem C]{BFP}) that contains all known examples of partially hyperbolic diffeomorphisms
in manifolds with non virtually solvable fundamental group (see \cite[Theorem A]{BFP}). 

\begin{definition}\label{defi.caf}
A partially hyperbolic diffeomorphism $f: M \to M$ is a \emph{collapsed Anosov flow} if there exists a (topological) Anosov flow $\phi_t: M \to M$, a self orbit equivalence $\beta: M \to M$ and a continuous map $h: M \to M$ homotopic to the identity  which is $C^1$ along orbits of the flow and such that $\partial_t h(\phi_t(x))|_{t=0} \in E^c(h(x)) \setminus \{0\}$ and such that $f \circ h  = h \circ \beta$. 
\end{definition}

In \cite[Theorem A]{BFP} we actually show that all known examples verify a slightly stronger assumption that we call \emph{strong collapsed Anosov flow} which under our orientability assumptions is also open and closed among partially hyperbolic diffeomorphisms. In \S \ref{ss.scaf} we introduce this notion since the proof of Theorem \ref{teo.main} admits a shortcut if one restricts to this class. (Note that it is an open question if being collapsed Anosov flow implies being strong collapsed Anosov flow.)

\subsection{Orientability assumptions}\label{ss.orient} \label{R: Added this subsection.} 
Here we comment on the assumption we have made that all bundles are orientable and that $Df$ preserves their orientation. This is no loss of generality as we shall now explain. 

Firstly, let us remark that if $f$ is a partially hyperbolic diffeomorphism of a manifold $M$ and $g$ is a lift of an iterate of $f$ to a finite cover $\hat M$ of $M$, then, accessibility of $g$ implies accessibility of $f$. To see this, notice that the strong stable and strong unstable manifolds of an iterate of $f$ are the same as those of $f$, so taking iterates does not change the fact that the whole manifold is an accessibility class. Further, an accessibility class in $\hat M$ projects to an accessibility class in $M$, thus, accessibility in $\hat M$ for $g$ implies accessibility of $f$ as desired.

Now we need to justify why taking a lift of an iterate of a collapsed Anosov flow to a finite cover is still a collapsed Anosov flow which will end the justification that our assumptions are in fact no loss of generality. 

First, notice that taking an iterate of a collapsed Anosov flow is still a collapsed Anosov flow: one just needs to keep the same flow $\phi_t$ and map $h$ in Definition~\ref{defi.caf} and pick an iterate of $\beta$. Consider now a collapsed Anosov flow $f : M \to M$ and let $\pi: \hat M \to M$ be a finite lift corresponding to the orientation of the bundles
(i.e. $\pi: \hat M \to M$ is a lift so that the bundles $E^s, E^c, E^u$ are orientable in the cover). In addition we
assume that $f$ lifts to $\hat f: \hat M \to \hat M$ so that $\hat f$
preserves all orientations of the bundles $-$ this can be achieved
by further lifts and iterates. We claim that $\hat f$ is also a collapsed Anosov flow. 
The flow $\phi_t$ lifts to 
$\hat \phi_t$, which is
 a topological Anosov flow in $\hat M$. In addition since $h$ is homotopic to the identity the
actions on homotopy induced by $f$ and $\beta$ on $\pi_1(M)$
are the same. Therefore
$\beta$ also lifts to $\hat \beta$ in 
$\hat M$. Clearly $\hat \beta$ is a self orbit equivalence
of $\hat \phi_t$. Finally the homotopy from the identity
to  $h$, lifts $h$ to $\hat h$ in $\hat M$. 
By construction $\hat f \circ \hat h = \hat h \circ \hat \beta$.
In other words $\hat f$ is also a collapsed Anosov flow.
This concludes the proof of our claim.

\section{Idea of the proof of Theorem \ref{teo.main}}

Here we explain some of the main ideas to prove Theorem 
\ref{teo.main}.

Since $f$ is a collapsed Anosov flow associated with
an Anosov flow $\phi_t$, there 
is a self orbit equivalence $\beta$ of the flow $\phi_t$,
and a map $h$
homotopic to the identity so that $f \circ h = h \circ \beta$.

Our assumptions give that there are periodic orbits
$\alpha_1, \alpha_2$ of $\phi_t$, which are
freely homotopic to the inverses 
of each other and 
 which lift to orbits $\widetilde \alpha_1, \widetilde \alpha_2$
that are 
corners of a lozenge $\cL$ invariant by deck transformation
$\gamma$ associated to $\alpha_1$ (c.f. Corollary \ref{coro.AF}).

By Proposition \ref{p.periodiclozenge} (and because the other cases where dealt in \cite{FP}) we can choose $\alpha_1, \alpha_2$
so that the lozenge $\cL$ is also invariant by a lift of a
power of the self orbit equivalence $\beta$.
Let $o_i = \widetilde \alpha_i$.

Roughly, the idea of the proof of Theorem \ref{teo.main} is
that the orbits $\alpha_1, \alpha_2$
above and the fact that they are (essentially)
transverse to the lamination $\Lambda^{su}$ force a
 closed curve in a leaf of $\Lambda^{su}$ which is invariant 
by a power of $f$. It is easy to show that this leads to a contradiction (see Proposition \ref{p.curvesgen}).

\begin{figure}[ht]
\begin{center}
\includegraphics[scale=0.38]{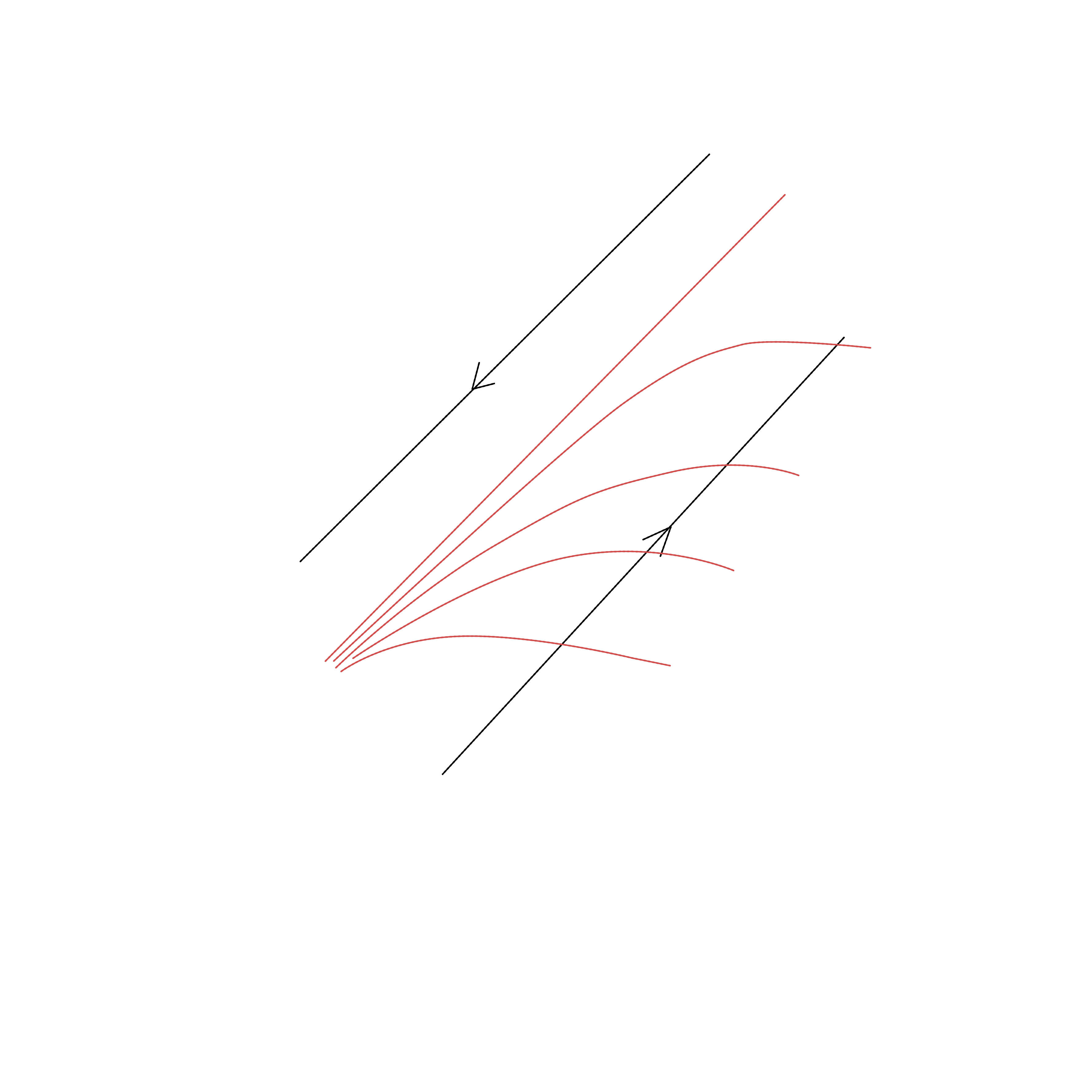}
\begin{picture}(0,0)
\put(-233,103){$o_2$}
\put(-157,13){$o_1$}
\put(-110,40){$E$}
\put(-87,75){$\gamma(E)$}
\put(-20,149){$\gamma^n(E)$}
\put(-110,168){$L$}
\end{picture}
\end{center}
\vspace{-0.5cm}
\caption{{\small $E$ is a leaf of $\widetilde \Lambda^{su}$ 
intersecting $o_1$. It cannot intersect $o_2$ and neither its
iterates by $\gamma$. Hence $\gamma^n(E)$ 
($n \rightarrow \infty$) converges to a 
unique leaf $L$ of $\widetilde \Lambda^{su}$
which separates $o_1$ from $o_2$
(the sequence $\gamma^n(E)$ 
may also converge to other leaves as well, but $L$ is 
uniquely determined). 
This figure is supposed to be 3-dimensional, we draw a 
2 dimensional slice of it.}}\label{fig4}
\end{figure}

Let us explain how the curve is found: to simplify the explanation
let us assume in this section that $h$ is injective, hence a homeomorphism.
Notice that $h$ sends flow lines of $\phi_t$ to $C^1$ curves tangent
to $E^c$ and hence transverse to the lamination $\Lambda^{su}$.

Lift the homotopy of $h$ to the identity to produce
$\wh$.

We consider the action of $\gamma$ on $\wh(\widetilde \alpha_1)$
and $\wh(\widetilde \alpha_2)$. By Proposition \ref{p.lozenges},
$\gamma$ acts increasingly on $\wh(\widetilde \alpha_1)$
and decreasingly on $\wh(\widetilde \alpha_2)$. This is the 
crucial fact. In \S~\ref{s.transverselam} we show that this implies that for any
leaf $E$ of $\widetilde \Lambda^{su}$ then $E$ cannot intersect
both $\wh(\widetilde \alpha_1)$ and $\wh(\widetilde \alpha_2)$
because $E$ separates $\mt$.
Hence, starting with $E$ intersecting $o_1 = \widetilde \alpha_1$,
we can produce (see figure~\ref{fig4}) a unique leaf $L$ of $\widetilde \Lambda^{su}$
which separates in 
a specific way $\wh(\widetilde \alpha_1)$ from 
$\wh(\widetilde \alpha_2)$ (see Proposition \ref{p.lamination}). This leaf $L$ is fixed by $\gamma$.

In \S~\ref{s.surfacetransversetoAF} we do a careful analysis of how a surface like $L$ can intersect 
the image under $\wh$ of 
the stable and unstable leaves of $\widetilde \alpha_i$ and Proposition \ref{p.closedcurve} 
produces a curve in the intersection which is invariant by $\gamma$.
By the choice above we can prove this curve is also invariant
by a power of a lift of an iterate of $f$.
This is where the semiconjugacy betweeen $f$ and $\beta$
by $h$ is used.
This projects to a closed curve in a leaf of $\Lambda^{su}$
which is invariant by a power of $f$ that gives a contradiction as explained above.

In the rest of the article we carefully carry out this
strategy.

\section{Surfaces transverse to collapsed Anosov flows}\label{s.surfacetransversetoAF}

In this section we somewhat extend \cite[Proposition 4.3]{FenleySurfaces}. The context is slightly different since we need to take care of a different setup, but some of the ideas are very similar. Our result in this direction is Proposition \ref{p.boundary} below,  stating that in the orbit space of an Anosov flow, the boundary of the orbits that intersect a surface uniformly transverse to the flow is made of entire weak stable and weak unstable leaves.

The goal is to understand the intersection between a surface transverse 
to the center bundle of a (collapsed) Anosov flow and how it behaves in its boundary in the 
``collapsed orbit space". The main result of this section is Proposition \ref{p.closedcurve} which produces some closed curves in the intersection of certain surfaces transverse to the center bundle and the image under
$h$ of the weak stable or weak unstable foliation of the Anosov flow. This will be used later for certain $su$-surfaces to find some contradiction assuming non accessibility.

\subsection{Setup}\label{ss.setup} 
Let $\phi_t: M \to M$ be a (topological) Anosov flow and let $Y$ be a non-singular vector field in $M$ such that there is a map $h: M \to M$ continuous and homotopic to the identity, with $h$  being $C^1$ along orbits of the flow,
and in addition such that $Y(h(x))$ belongs to
$\RR_+ \partial_t h(\phi_t(x))|_{t=0}$.
By non singular we mean that $Y$ is continuous and never zero.

We consider in $\mt$, the universal cover of $M$, the orbit space $\cO_\phi$ of the flow $\wphi$ lifted to $\mt$. We can take $\hht$ a lift of $h$ commuting with deck transformations and at bounded distance from the identity (i.e. take a lift of a homotopy to the identity to construct $\hht$) and we denote by $\tY$ the lifted vector field.  

For $x \in \mt$ we denote $o_x$ to the orbit of $\wphi$ containing it. 

\begin{definition}Given an orbit $o$ of $\wphi$ we define $c_o$ to be the image by $\hht$ of $o$ which is a curve tangent to $\tY$. 
\end{definition}

We consider the sets $\what{ \cs}(c_o)$ and $\what{\cu}(c_o)$ to be the images by $\hht$ of the weak stable and weak unstable foliations of $o$. Note that these are a priori
only topological objects and not necessarily $C^1$ immersed surfaces.
In addition a priori there may be topological crossings, so the
collection of these may not 
form a (topological) branching foliation. 

\subsection{Transverse surfaces}\label{ss.transverse}
We consider $S$ a properly embedded surface in $\mt$ which is uniformly transverse to $\tY$. 
We will assume that for each $o \in \cO_\phi$ the curve $c_o$ intersects $S$ in at most one point\footnote{This is the case for instance, when $S$ separates $\mt$ in two connected components, e.g. if $S$ is the lift of a leaf of a Reebless foliation.}.

It will be important to understand better which curves
$c_o$ the surface $S$ intersects. Since $Y$ is only continuous, 
distinct curves $c_o$ may intersect. Hence it is easier and more
convenient to understand this set of intersections with $S$
from the point of view of the
orbit space of $\wphi$ as follows.
We define:

 $$S_\cO = \{ o \in \cO_\phi \ : \ c_o \cap S \neq \emptyset \}. $$

\begin{lemma}\label{l.Sopen}
The set $S_{\cO}$ is open in $\cO_\phi$. 
\end{lemma}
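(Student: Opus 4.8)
The plan is to prove openness pointwise: fix $o \in S_\cO$ and produce a neighborhood of $o$ in $\cO_\phi$ contained in $S_\cO$. The guiding idea is that $c_o$ crosses $S$ \emph{transversally} at its intersection point $p$, so $c_o$ passes from one local side of $S$ to the other; since the family of curves $c_{o'}$ depends continuously on $o'$, nearby curves must still pass from one side to the other, and hence meet $S$ by an intermediate value argument. Two ingredients are needed: a continuous parametrization of the family $\{c_{o'}\}$ near $o$, and a local ``side-switching'' description of the transverse crossing.

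For the first ingredient, I would use that the orbit projection $\mt \to \cO_\phi$ is a continuous open surjection with $\RR$-fibers and that $\wphi$ admits local cross-sections; hence there is a continuous local section $\sigma\colon V \to \mt$ of the orbit projection, defined on a neighborhood $V$ of $o$, with $\sigma(o)$ a chosen basepoint of $o$. Define $F\colon V \times \RR \to \mt$ by $F(o',t) = \hht(\wphi_t(\sigma(o')))$. Then $F$ is jointly continuous, $t \mapsto F(o,t)$ parametrizes $c_o$, and for each $o'$ the map $t \mapsto F(o',t)$ parametrizes $c_{o'}$. Write $p = F(o,t_0)$.

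For the second ingredient, I would use that $S$ is uniformly transverse to the continuous nonvanishing field $\tY$: pushing $S$ along $\tY$ produces a bicollar near $p$, i.e.\ a neighborhood $U$ of $p$ and a continuous function $\rho\colon U \to \RR$ with $S \cap U = \{\rho = 0\}$, the two local sides being $\{\rho > 0\}$ and $\{\rho < 0\}$, and with $\rho$ strictly monotone along curves tangent to $\tY$. Since $c_o$ is $C^1$ near $p$ with tangent a positive multiple of $\tY(p) \notin T_p S$, it switches sides: there is $\eps>0$ with $F(o,[t_0-\eps,t_0+\eps]) \subset U$ and $\rho(F(o,t_0-\eps)) < 0 < \rho(F(o,t_0+\eps))$. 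Now apply the tube lemma to the compact set $\{o\}\times[t_0-\eps,t_0+\eps]$ and the open set $F^{-1}(U)$ containing it, together with openness of $\{\rho>0\}$, $\{\rho<0\}$ and the strict inequalities above: there is a neighborhood $V' \subset V$ of $o$ so that for every $o' \in V'$ one has $F(o',[t_0-\eps,t_0+\eps]) \subset U$, $\rho(F(o',t_0-\eps)) < 0$ and $\rho(F(o',t_0+\eps)) > 0$. Then $t \mapsto \rho(F(o',t))$ is continuous on $[t_0-\eps,t_0+\eps]$ and changes sign, so by the intermediate value theorem $c_{o'}$ meets $S \cap U \subset S$; hence $o' \in S_\cO$ and $V' \subset S_\cO$.

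The main obstacle is the second ingredient: making precise, for a merely continuous field $\tY$ and a possibly only topological surface $S$, that uniform transversality yields the bicollar and defining function $\rho$, and that the $C^1$ curve $c_o$ genuinely switches sides at $p$ rather than merely touching $S$. Here the hypotheses that $\tY$ is nonvanishing, that $c_o$ is tangent to $\tY$ with tangent a positive multiple of $\tY(p)$, and that the transversality of $S$ with $\tY$ is uniform are exactly what exclude tangential contact and force the clean crossing; the assumption that $c_o \cap S$ is a single point makes the local picture unambiguous.
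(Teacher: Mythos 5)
Your proposal is correct and follows essentially the same route as the paper: the paper's proof is a single sentence asserting that openness ``is just transversality of $S$ and $\widetilde Y$'' together with the fact that $\hht$ maps orbits to curves tangent to $\widetilde Y$, and your argument (continuous parametrization of the family $c_{o'}$ plus a side-switching/intermediate-value argument at a transverse crossing) is precisely a detailed unpacking of that assertion. The technicality you flag about extracting a bicollar from uniform transversality of merely continuous data is exactly what the paper leaves implicit, so it is not a defect of your approach relative to the paper's.
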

\begin{proof}
This is just transversality of $S$ and $\widetilde Y$ and that $\hht$ 
maps orbits of $\wwp$ to curves tangent to $\widetilde Y$. 
\end{proof}

We will define a function that indicates how the surface approaches $c_o$,
particularly when $o$ is a boundary point of $S_{\cO}$.
This function depends on some choices, but its asymptotic behavior for 
points in the boundary is well defined. Fix $x \in o$ and a transversal $D$ to $o$ at $x$. We can define $\tau^S: D \to \RR \cup \{\infty\}$ as 

$$\tau^S(y)=\infty \ \ \text{ if  } \ \ \hht(o_y) \cap S = \emptyset, \text{ and } $$

$$ \tau^S(y) = t_y \in \RR  \ \ \text{ if } \ \ \hht(\wt{\phi_{t_y}}(y)) \in S. $$

The funtion $\tau^S$ also depends on the choice of $D$ which
is left implicit.
Suppose that the orbit $o$ of $\wwp$ intersects $D$.
Then clearly $c_o$ intersects $S$ if and only if 
$\tau^S(o \cap D)$ is a real number, that is, it is finite.
 But in fact one deduces that $\tau^S$ is uniformly bounded in a 
neighborhood of 
$o \cap D$   when $c_o$ cuts $S$. 
Intuitively if $\tau^S(y) > 0$ then $S$ is ``above" (or flow forward of)
$\wh(y)$ and if $\tau^S(y) < 0$, then $S$ is ``below"
$\wh(y)$.
By uniform transversality, it is easy to see that: 

\begin{lemma}\label{l.taucontinuous}
The function $\tau^S$ is continuous in every point on which $\tau^S$ is finite. 
\end{lemma}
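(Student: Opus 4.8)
The plan is to turn the finiteness of $\tau^S(y_0)$ into a clean transverse crossing of the surface $S$ and to show that this crossing persists, and varies continuously, for nearby orbits. First I would package the relevant data into the map $\Psi(y,t) = \hht(\wt{\phi_t}(y))$, defined for $y$ in the transversal $D$ and $t \in \RR$. Since $\hht$ is continuous and the lifted flow $\wphi$ is continuous, $\Psi$ is jointly continuous in $(y,t)$; and since $\hht$ is $C^1$ along orbits with $\partial_t \hht(\wt{\phi_t}(y))|_{t=s} \in \RR_+\, \tY(\Psi(y,s))$, for each fixed $y$ the curve $t \mapsto \Psi(y,t)$ is $C^1$ and its velocity is a \emph{positive} multiple of $\tY$ at the corresponding point. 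Fix $y_0 \in D$ with $\tau^S(y_0) = t_0 \in \RR$, and let $p_0 = \Psi(y_0,t_0) \in S$ be the (unique) point where $c_{o_{y_0}}$ meets $S$.

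Next I would exploit uniform transversality to describe $S$ locally. Since $S$ is transverse to $\tY$ and the curves $\Psi(y,\cdot)$ are tangent to $\tY$, in a neighborhood $U$ of $p_0$ the surface $S$ separates $U$ into two sides, and there is a continuous function $G\colon U \to \RR$ with $S \cap U = G^{-1}(0)$ which is \emph{strictly increasing} along every positively oriented arc tangent to $\tY$ inside $U$ (this is precisely the content of transversality, the constant sign being guaranteed by uniform transversality). Set $F(y,t) = G(\Psi(y,t))$ wherever $\Psi(y,t) \in U$. For fixed $y$ the function $t \mapsto F(y,t)$ is then strictly increasing, because $\partial_t \Psi(y,t)$ is a positive multiple of $\tY(\Psi(y,t))$. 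In particular, choosing $\delta > 0$ small enough that the arc $\Psi(y_0, [t_0 - \delta, t_0 + \delta])$ stays in $U$, we get $F(y_0, t_0 - \delta) < 0 < F(y_0, t_0 + \delta)$.

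The continuity of $\tau^S$ at $y_0$ would then follow from an intermediate value argument in the $y$ variable. By joint continuity of $\Psi$ and compactness of $[t_0 - \delta, t_0 + \delta]$, for all $y$ in a small neighborhood of $y_0$ the whole arc $\Psi(y, [t_0 - \delta, t_0 + \delta])$ remains in $U$, and $F(y, t_0 - \delta) < 0 < F(y, t_0 + \delta)$. Since $t \mapsto F(y,t)$ is continuous and strictly increasing, it has a unique zero $t_y \in (t_0 - \delta, t_0 + \delta)$, i.e. $\Psi(y, t_y) \in S$. By the standing assumption that $c_o$ meets $S$ in at most one point, this is the only intersection of $c_{o_y}$ with $S$, so $\tau^S(y) = t_y$ and $|\tau^S(y) - t_0| < \delta$. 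As $\delta$ may be taken arbitrarily small, $\tau^S$ is continuous at $y_0$.

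I expect the only genuine obstacle to be the limited regularity of $\hht$: it is merely continuous transversally to the flow and $C^1$ only \emph{along} orbits, so $\Psi$ is not jointly $C^1$ and the implicit function theorem is unavailable. This is exactly what forces the monotonicity-plus-intermediate-value route above rather than a smooth inverse-function argument: although $\Psi$ is only continuous in $(y,t)$, the composite $F(y,\cdot)$ is strictly monotone in $t$ for every fixed $y$ (this uses transversality together with $\partial_t \Psi \in \RR_+ \tY$), and a family that is continuous in $y$, strictly monotone in $t$, and sign-changing has a continuously varying zero. The uniform transversality hypothesis enters to guarantee that $S$ genuinely separates $U$ and that the relevant sign of $G$ is constant, so that the endpoints $t_0 \pm \delta$ land on opposite sides.
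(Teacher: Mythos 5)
Your proof is correct and takes essentially the approach the paper intends: the paper gives no written argument at all, simply asserting the lemma follows ``by uniform transversality,'' and your argument --- a local defining function $G$ for $S$, strict monotonicity of $t \mapsto G(\Psi(y,t))$ coming from the velocity lying in $\RR_+\tY$, sign persistence by joint continuity, the intermediate value theorem, and uniqueness via the standing one-intersection assumption --- is precisely that transversality argument carried out in detail. The only implicit point is that $S$ has enough regularity (e.g.\ $C^1$, as holds for the leaves of $\wt{\Lambda}$ to which this is applied) for a local defining function to exist, which is what ``transverse'' already presupposes.
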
 

\subsection{Boundaries} 
We want to understand how the function $\tau^S$ goes to infinity when
one approaches from 
$S_{\cO}$ the points of $\cO_\phi$ which are in the boundary of $S_{\cO}$.

\begin{lemma}\label{l.divergencetau} 
If $o \notin S_{\cO}$ then the function $\tau^S$ is uniformly bounded above in $\wt{\cF^{ws}}(o) \cap D$ and uniformly bounded below in $\wt{\cF^{wu}}(o)\cap D$. More precisely, for every $x \in o$ and transverse disk $D$ we have that there exists $K >0$ such that for every $z \in D$ such that $o_z \notin S_{\cO}$ then:
\begin{itemize}
\item  If $y \in \wt{\cF^{ws}}(o_z) \cap D$  
we have that $\tau^S(y) \in (-\infty, K) \cup \{\infty\}$.
\item  If
 $y \in \wt{\cF^{wu}}(o_z) \cap D$ then we have that
$\tau^S(y) \in (-K, +\infty) \cup \{\infty\}$.  
\end{itemize}
\end{lemma}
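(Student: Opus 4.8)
The plan is to argue by contradiction, treating the weak stable statement (the weak unstable one follows identically after reversing the flow). The argument rests on three uniform facts. First, that two orbits of $\wt{\phi_t}$ lying in a common \emph{local} weak stable leaf converge uniformly in forward time (and two in a common local weak unstable leaf converge uniformly in backward time); this is the local product structure of the topological Anosov flow recorded in \S\ref{ss.topologicalAnosovflows}, together with compactness of $M$. Second, that $\hht$ is uniformly continuous, which is immediate since it is a lift at bounded distance from the identity of a continuous self-map of the closed manifold $M$. Third, that the uniform transversality of $S$ to $\tY$ yields a \emph{uniform flow box}: there is $\rho>0$ such that any curve tangent to $\tY$ passing within distance $\rho$ of a point of $S$ must cross $S$. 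To deploy the first fact I first shrink $D$ so that its image in $\cO_\phi$ sits inside a single foliation chart of both $\cG^s$ and $\cG^u$; then whenever $o_y,o_z$ meet $D$ and lie in a common weak stable (resp. weak unstable) leaf, the intersection $\wt{\cF^{ws}}(o_z)\cap D$ is a single plaque, so $y$ and $z$ lie in a common local weak stable (resp. weak unstable) leaf and the above convergence applies with a modulus uniform over the compact disk $D$.

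Now fix $z\in D$ with $o_z\notin S_{\cO}$ and $y\in\wt{\cF^{ws}}(o_z)\cap D$, and suppose $t_y:=\tau^S(y)$ is finite, so that $\hht(\wt{\phi_{t_y}}(y))\in S$. Let $T=T(D)$ be the threshold furnished by the uniform forward convergence such that, for $t>T$, the distance $d(\wt{\phi_t}(y),\wt{\phi_t}(z))$ is small enough that uniform continuity of $\hht$ places $\hht(\wt{\phi_t}(y))$ and $\hht(\wt{\phi_t}(z))$ within $\rho$ of one another. If $t_y>T$, then $\hht(\wt{\phi_{t_y}}(z))$ lies within $\rho$ of the point $\hht(\wt{\phi_{t_y}}(y))\in S$; since $c_{o_z}=\hht(o_z)$ is a curve tangent to $\tY$ passing through $\hht(\wt{\phi_{t_y}}(z))$, the uniform flow box forces $c_{o_z}$ to cross $S$, i.e. $o_z\in S_{\cO}$, a contradiction. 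Hence $\tau^S(y)\le T$ for all admissible $y$, and taking $K=T+1$ gives $\tau^S(y)\in(-\infty,K)\cup\{\infty\}$. Replacing forward convergence in weak stable leaves by backward convergence in weak unstable leaves, the same reasoning shows that a sufficiently negative $\tau^S(y)$ would drive $c_{o_z}$ within $\rho$ of $S$ in backward time, again contradicting $o_z\notin S_{\cO}$, which yields $\tau^S(y)\in(-K,+\infty)\cup\{\infty\}$ on weak unstable leaves.

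The delicate point, and the one I expect to be the main obstacle, is the uniformity of $K$: one needs a single convergence threshold $T(D)$ and a single flow-box radius $\rho$ valid simultaneously for all admissible pairs $y,z$. The radius $\rho$ is exactly what the hypothesis of \emph{uniform} transversality of $S$ to $\tY$ supplies, and it must hold along all of $S$, since the relevant contact points $\hht(\wt{\phi_{t_y}}(y))\in S$ recede to infinity in $\mt$ as $t_y$ grows. The threshold $T(D)$ requires care: the choice of $D$ as a product box for $\cG^s$ and $\cG^u$ guarantees that same-leaf pairs meeting $D$ genuinely lie in a common \emph{local} stable (or unstable) leaf, and then compactness of $M$ promotes the pointwise forward (backward) convergence of local stable (unstable) manifolds to a modulus uniform over $D$. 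Checking that this modulus does not degenerate as $z$ approaches the boundary of $S_{\cO}$ within $D$ is the crux, and is resolved through the compactness of $D$ together with the local product structure.
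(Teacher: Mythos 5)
Your proposal is essentially the paper's own proof: both arguments combine the uniform flow-box property coming from uniform transversality of $S$ to $\tY$, uniform continuity of $\hht$, and uniform forward contraction in weak stable leaves over the compact disk $D$, to conclude that a sufficiently large finite value of $\tau^S(y)$ would force $c_{o_z}$ to cross $S$, contradicting $o_z \notin S_{\cO}$. The only (harmless) imprecision is your same-time estimate on $d(\wt{\phi_t}(y),\wt{\phi_t}(z))$: in a weak stable leaf orbits converge only up to reparametrization, so one should instead bound the distance $d(\wt{\phi_t}(y), \wt \phi_{\RR}(z))$ to the whole orbit of $z$ -- as the paper does -- which suffices because your flow-box step only needs the point $\hht(\wt{\phi_{t_y}}(y)) \in S$ to be close to \emph{some} point of the curve $c_{o_z}$.
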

\begin{proof}
By uniform transversality between $S$ and the vector 
field $Y$, we know that if points $x,y \in \mt$ verify that $d(x,y)<\eps$ and $x \in S$ then every curve tangent to $Y$ through $y$ will intersect $S$.

We prove the first property as the second is entirely analogous.
Since $h$ is continuous there is $\delta > 0$ so that if
$d(x,y) < \delta$ then $d(\wh(x),\wh(y)) < \eps$.
By the contraction property of Anosov flows we know that there exists $t_0>0$ such that for every $z \in D$ we have that if $y \in \wt{\cF^{ws}}(o_z) \cap D$ and $t>t_0$ 
then $d(\wphi(y),\widetilde \phi_{\R}(z))< \delta$. It follows that if $o_z \notin S_{\cO}$ but 
$o_y \in S_{\cO}$, then $\tau^S(y) < t_0$.  Since the $t_0$ above
can be chosen uniform over $D$, this implies the result. 
\end{proof}

Recall that we denote by $\cG^s$ and $\cG^u$ to the foliations in $\cO_\phi$ induced by $\wt{\cF^{ws}}$ and $\wt{\cF^{wu}}$. As a consequence, we get: 

\begin{prop}\label{p.boundary}
If $o \in \partial S_{\cO}$ then either $\cG^s(o)$ or $\cG^u(o)$ is contained in $\partial S_{\cO}$. 
The first case happens if there is $V$ a neighborhood of $o \cap D$ in $D$ such that $\tau^S$ is bounded below in $V$ and the second case happens if $\tau^S$ is bounded above in a similar neighborhood.  
\end{prop}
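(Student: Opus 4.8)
The plan is to reduce the statement to a local analysis near $o$ and then propagate along the leaf by a connectedness argument. Since $S_{\cO}$ is open (Lemma~\ref{l.Sopen}), the boundary point $o$ lies outside $S_{\cO}$, so $\tau^S(o\cap D)=\infty$; and since $o\in\partial S_{\cO}$ there is a sequence $o_n\in S_{\cO}$ with $o_n\to o$. First I would observe that the values $\tau^S(y_n)$, where $y_n=o_n\cap D$, must diverge: if some subsequence stayed bounded, the intersection points $c_{o_n}\cap S=\hht(\wt{\phi_{\tau^S(y_n)}}(y_n))$ would, after passing to a further subsequence, converge to a point of $c_o$, and as $S$ is properly embedded this limit would lie on $S$, forcing $c_o\cap S\neq\emptyset$ and contradicting $o\notin S_{\cO}$. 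Thus $\tau^S(y_n)\to+\infty$ or $\to-\infty$ along subsequences, and the hypothesis that $\tau^S$ is bounded below (resp. above) on a neighborhood $V$ forces $+\infty$ (resp. $-\infty$). It therefore suffices to treat the first case, $\tau^S(y_n)\to+\infty$, and prove $\cG^s(o)\subset\partial S_{\cO}$; the second case is identical after reversing the flow (replacing $Y$ by $-Y$), which exchanges $\cG^s$ with $\cG^u$ and the two bounds.

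Next I would establish the local picture near $o$: a neighbourhood of $o$ inside $\cG^s(o)$ is disjoint from $S_{\cO}$. Applying Lemma~\ref{l.divergencetau} at $o$ (which is not in $S_{\cO}$) bounds $\tau^S$ from above by some $K$ along $\wt{\cF^{ws}}(o)\cap D$, i.e. along $\cG^s(o)\cap D$; combined with the first-case hypothesis $\tau^S>-K'$ on $V$, the function $\tau^S$ lies in $[-K',K]$ on $\cG^s(o)\cap V$ wherever it is finite. But if points of $S_{\cO}$ on $\cG^s(o)$ accumulated at $o$, their $\tau^S$-values would have to diverge by the argument of the previous paragraph (now run along the leaf), contradicting this bound. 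Hence no point of $S_{\cO}\cap\cG^s(o)$ accumulates at $o$, so a neighbourhood of $o$ in $\cG^s(o)$ avoids $S_{\cO}$.

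I would then upgrade this to the whole leaf and to membership in the boundary. For global disjointness, suppose $\cG^s(o)$ met $S_{\cO}$ and let $q^\ast$ be the endpoint, on the far side, of the connected component of $\cG^s(o)\setminus S_{\cO}$ containing $o$; then $q^\ast\notin S_{\cO}$, yet points of $S_{\cO}$ accumulate on $q^\ast$ along the leaf. Lemma~\ref{l.divergencetau} at $q^\ast$ again bounds $\tau^S$ above along $\cG^s(q^\ast)=\cG^s(o)$, so these accumulating values diverge to $-\infty$. Here I would use that $S$ separates $\mt$ — the setting in which this proposition is applied, and which underlies the standing assumption that each $c_o$ meets $S$ at most once: the side of $S$ containing $c_z$ is then locally constant in $z$ on the complement of $S_{\cO}$, hence constant along the connected out-segment from $o$ to $q^\ast$. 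But $c_o$ lies on the side from which $S$ is reached by positive flow (first case), while the $\tau^S\to-\infty$ points near $q^\ast$ push $c_{q^\ast}$ onto the closure of the opposite side, a contradiction. Therefore $\cG^s(o)\cap S_{\cO}=\emptyset$. Finally, to see $\cG^s(o)\subset\overline{S_{\cO}}$, I would show that $J=\cG^s(o)\cap\overline{S_{\cO}}$ is open and closed in the connected leaf: it is closed since $\overline{S_{\cO}}$ is, and open because at each $p\in J$ the first-case behaviour persists, so a forward-shadowing argument applies — the forward ray of $c_p$ is asymptotic to that of $c_o$, along which $S$ accumulates, and uniform transversality of $S$ and $\tY$ then produces orbits near $p$ whose $c$-curves cross $S$, placing a neighbourhood of $p$ in $\cG^s(o)$ inside $\overline{S_{\cO}}$. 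As $o\in J$, this yields $J=\cG^s(o)$, i.e. $\cG^s(o)\subset\partial S_{\cO}$.

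The main obstacle is this last (closure) direction. One must convert ``$S$ accumulates on the forward end of $c_o$'' into ``$S_{\cO}$ accumulates on $\cG^s(o)$ in the orbit space,'' and the difficulty is that the approximating orbits are detected far out along the flow, precisely where many orbits of a single stable leaf become mutually close. Thus one has to argue carefully that neighbourhoods of the far-forward points in $\mt$ meet orbits whose projection to $\cO_\phi$ is close to the prescribed point $p$ of $\cG^s(o)$. Controlling this correspondence between the geometry deep in the flow direction and the topology of the orbit space — together with keeping the side, equivalently the sign of divergence of $\tau^S$, coherent along the entire non-compact leaf, since Lemma~\ref{l.divergencetau} only supplies constants local to a transversal — is where the real work lies.
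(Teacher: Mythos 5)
Your disjointness half is correct, and part of it is genuinely different from the paper's argument. The divergence of $\tau^S$ at a boundary point, the local exclusion near $o$, and especially the global statement $\cG^s(o)\cap S_{\cO}=\emptyset$ via the endpoint $q^\ast$ and the side-constancy of $c_z$ relative to the separating surface $S$ are all sound; the paper gets disjointness instead by propagating the sign of divergence along the leaf with a chain of transverse disks, so your separation argument is a nice alternative for that half. But $\partial S_{\cO}=\overline{S_{\cO}}\setminus S_{\cO}$, so the proposition also requires $\cG^s(o)\subset\overline{S_{\cO}}$, and this closure half — which you yourself flag as the main obstacle — is where your proof genuinely fails rather than merely requires care.

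The openness of $J$ rests on the premise that $S$ accumulates along the forward ray of $c_o$ (equivalently of $c_p$). This premise is false. Since $o\notin S_{\cO}$, uniform transversality of $S$ and $\tY$ forces $d(S,c_o)\geq\eps$: if a point of $S$ were $\eps$-close to a point of $c_o$, then $c_o$, being a curve tangent to $\tY$, would have to intersect $S$. By your own disjointness step the same holds for every $c_p$ with $p\in\cG^s(o)$, so $S$ stays a definite distance away from all these curves and their rays. Nor do the actual crossing points $\hht(\wt{\phi_{\tau^S(x_n)}}(x_n))$, for $x_n\to o\cap D$ with $o_{x_n}\in S_{\cO}$, lie near the forward ray of $c_o$: orbits off $\wt{\cF^{ws}}(o)$ separate from the orbit $o$ at large forward times, and the orbits on $\wt{\cF^{ws}}(o)$ are exactly the ones excluded from $S_{\cO}$. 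So there is nothing for the proposed forward-shadowing step to shadow. The mechanism that does work — and is the heart of the paper's proof — is dual to yours: the crossing point of $c_{o_{x_n}}$ with $S$, occurring at time $\tau^S(x_n)>t_0$, is $\delta$-close to the orbit of \emph{every} $z\in\wt{\cF^{ws}}(o_{x_n})\cap D$ (forward contraction inside the stable leaf of the \emph{approximating} orbit, not of $o$), so uniform transversality places the entire leaf $\cG^s(o_{x_n})$ through $D$ inside $S_{\cO}$, with the corresponding values of $\tau^S$ again going to $+\infty$; since these leaves converge to $\cG^s(o)$, this yields $\cG^s(o)\subset\overline{S_{\cO}}$ locally, and a chain of transverse disks carries the statement along the whole non-compact leaf. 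Without this (or an equivalent), your $J$ is closed but there is no argument that it is open. A secondary issue: your case reduction tacitly assumes one of the two boundedness hypotheses holds at $o$; to get the bare dichotomy in the first sentence of the proposition you must also exclude that $\tau^S$ is unbounded in both directions on every neighborhood of $o\cap D$ — your own side argument can do this, since sequences with $\tau^S\to+\infty$ and $\tau^S\to-\infty$ would force $c_o$ into both components of $\mt\setminus S$ at once.
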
 

\begin{proof}
As in the previous setup,
pick $x \in o$ and a disk $D$  transverse to $\wphi$ through
$x$. Consider $x_n \in D$ so that $x_n \in D$ verify that $o_{x_n} \in S_{\cO}$, and $x_n$ converges to $x$.
In particular $\tau(x_n)$ is a real number for all $n$.
First notice that $|\tau^S(x_n)|$ converges  to $\infty$.
Otherwise up to subsequence it converges to a real number
and then by continuity $S$ intersects $\wh(o_x)$, contradiction.

Up to subsequence and without loss of generality we can assume that $\tau^S(x_n) \to +\infty$. 
Since $\tau(x_n)$ is a real number,
Lemma \ref{l.divergencetau} implies that for $n$ sufficiently
big and for every $y \in \wt{\cF^{ws}}(o_{x_n}) \cap D$ we have that $o_y \in S_{\cO}$. Therefore we get that given $z \in \wt{\cF^{ws}}(o) \cap D$ there is a sequence $z_n \in D$ such that $z_n \to z$ and $o_{z_n} \in S_{\cO}$. Moreover, $o_z \notin S_{\cO}$ since $\tau^S(z_n) \to +\infty$:
this is because if $o_z \in S_{\cO}$ then $\tau^S(z)$ is real and
uniformly bounded in a neighborhood of $z$ in $D$.  
This shows that the local weak stable of the manifold of $o$ is
contained in $\partial S_{\cO}$.

Now we iterate this analysis. 
For any $v$ in $\cG^s(o)$ we find $D_1 = D, D_2,...,  D_n$ 
small disks transverse to $\wphi$, consecutive ones intersecting
a common orbit of $\wphi$,
and so that the segment in $\cG^s(o)$ between $o$ and $v$
is contained in the union of the orbits of $\wphi$ through the $D_i$.
In the last paragraph we showed that for any 
$z \in \wt{\cF^{ws}}(o) \cap D_1$
we find 
$$z_n \in D \ \ {\rm with} \ \  z_n \rightarrow z, \ \ 
o_{z_n} \in 
S_{\cO}, \ \ {\rm and} \ \  \tau^s(z_n) \rightarrow +\infty.$$

\noindent
Choose $z$ so that
$o_z$ also intersects $D_2$ and now apply the result to $D_2$. 
We get that for every $y \in \wt{\cF^{ws}}(o) \cap D_2$ then 
$o_y \in \partial S_{\cO}$. Induction proves that
$\cG^s(o) \subset \partial S_{\cO}$.

In addition $S_{\cO}$ is connected and hence in this case 
intersects only one complementary component of $\cG^s(o)$ in 
$\cO_\phi$.

\vskip .08in
The remaining option in the analysis above is that
$\tau^S(x_n)$ converges to $-\infty$ as $x_n$ converges
to $x$. Then  the same
type of arguments show that $\cG^u(o) \subset \partial S_{\cO}$.
In addition for 
any $z \in \wt{\cF^{wu}}(o)$  and for $z_n$ in $D$ 
with $o_{z_n} \in S_{\cO}$ converging
to $z$ then 
$\tau^S(z_n)$ converges to $-\infty$.

By the complementary condition these two possibilities are
incompatible. 
This proves the proposition.
\end{proof}

\subsection{Additional invariance}\label{ss.aditional} 
We now assume in addition
that there is a nontrivial  $\gamma \in \pi_1(M)$ such that: 

\begin{enumerate}
\item $\gamma$ fixes a lozenge $\cL$ of $\wphi$ whose corners we denote by $e_1$ and $e_2$ and such that $\gamma$ acts increasingly in the orientation of $e_1$ and decreasingly in the orientation of $e_2$, 
\item $\gamma(S) = S$, 
\item $S$ does not intersect $c_1:= c_{e_1}=\hht(e_1)$ nor $c_2:= c_{e_2}=\hht(e_2)$, 
\item $S$ intersects some orbit of the lozenge, that is, there is $o \in \cL$ such that $\hht(o) \cap S \neq \emptyset$. 
\end{enumerate}

Our purpose is to show: 

\begin{prop}\label{p.closedcurve}
Under the above assumptions it follows that there is a $\gamma$-invariant curve in $\what{\cs}(c_1) \cap S$ or in $\what{\cu}(c_1) \cap S$.  Moreover, if a homeomorphism preserves $S$, \ $\what{\cs}(c_1)$ and $\what{\cu}(c_1)$ then it also preserves this $\gamma$-invariant curve.  
\end{prop}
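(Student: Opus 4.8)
The plan is to transport the problem to the orbit space via the correspondence of \S\ref{ss.transverse}, reduce it to finding a single $\gamma$-invariant arc inside $\cG^s(e_1)\cap S_{\cO}$ or $\cG^u(e_1)\cap S_{\cO}$, and then push that arc back to $S$. Since $\hht$ maps $\wt{\cF^{ws}}(e_1)$ onto $\what{\cs}(c_1)$ and each curve $c_o$ meets $S$ in at most one point, the assignment $o\mapsto c_o\cap S$ is a continuous (by Lemma \ref{l.taucontinuous}) and $\gamma$-equivariant section identifying $\cG^s(e_1)\cap S_{\cO}$ with $\what{\cs}(c_1)\cap S$, and likewise $\cG^u(e_1)\cap S_{\cO}$ with $\what{\cu}(c_1)\cap S$. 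Thus it suffices to produce a $\gamma$-invariant arc in one of these two $1$-dimensional sets; its image under the section will be the desired curve. Note first that $\gamma$ fixes the orbit $e_1$, hence fixes $\cG^s(e_1)$, $\cG^u(e_1)$ and each of their half-leaves, while $\gamma S_{\cO}=S_{\cO}$ because $\gamma S=S$ and $\gamma$ commutes with both $\hht$ and $\wphi$; by Proposition \ref{p.lozenges}, $\gamma$ acts on $\cG^s(e_1)\cong\RR$ as an expansion fixing $e_1$ and on $\cG^u(e_1)\cong\RR$ as a contraction fixing $e_1$.

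By hypotheses (iii)--(iv) we have $e_1\in\overline{S_{\cO}}\setminus S_{\cO}$, so Proposition \ref{p.boundary} applies at $e_1$: either $\cG^s(e_1)\subset\partial S_{\cO}$ or $\cG^u(e_1)\subset\partial S_{\cO}$, and these alternatives are mutually exclusive. Assume the first holds (the second is symmetric under exchanging $s\leftrightarrow u$ and produces the curve in $\what{\cs}(c_1)\cap S$ instead). Then $\cG^s(e_1)\cap S_{\cO}=\emptyset$, so the curve has to come from $\cG^u(e_1)$. By connectedness of $S_{\cO}$, which intersects only one complementary component of $\cG^s(e_1)$, and by hypothesis (iv), $S_{\cO}$ lies in the component of $\cO_\phi\setminus\cG^s(e_1)$ containing the lozenge $\cL$; the side $A^u_1\subset\cG^u(e_1)$ emanates from $e_1$ into exactly this component.

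I would then establish the crux: the sub-ray of $A^u_1$ near $e_1$ lies in $S_{\cO}$. Granting this, the connected component of $A^u_1\cap S_{\cO}$ containing that sub-ray is an interval having $e_1$ as its open endpoint, and any contraction-invariant connected open subset of a half-leaf is forced to be of this form, so it is automatically a $\gamma$-invariant arc. To see the sub-ray lies in $S_{\cO}$: the local analysis in the proof of Proposition \ref{p.boundary} provides, in the first case, a neighborhood of $e_1\cap D$ on which $\tau^S$ is bounded below and whose points on the lozenge side of $\cG^s(e_1)$ belong to $S_{\cO}$; since $A^u_1\setminus\{e_1\}$ is transverse to $\cG^s(e_1)$ and enters that side, its points near $e_1$ are in $S_{\cO}$. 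The lozenge structure (the unstable leaves of interior orbits sweep across $A^u_1$, Definition \ref{defi.lozenge}), combined with the divergence estimates of Lemma \ref{l.divergencetau} and the $\gamma$-invariance of $S_{\cO}$, then propagate membership along the whole component. Applying the section of the first paragraph yields the $\gamma$-invariant curve in $\what{\cu}(c_1)\cap S$.

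For the last assertion, observe that the arc is characterized intrinsically: it is the connected component of $\what{\cu}(c_1)\cap S$ that accumulates onto $c_1$ from within $S_{\cO}$, the opposite half-leaf of $\cG^u(e_1)$ lying, near $e_1$, on the non-$S_{\cO}$ side of $\cG^s(e_1)$ and hence contributing no such component. A homeomorphism preserving $S$, $\what{\cs}(c_1)$ and $\what{\cu}(c_1)$ preserves $c_1=\what{\cs}(c_1)\cap\what{\cu}(c_1)$ as well as $\what{\cu}(c_1)\cap S$, and therefore preserves this distinguished component. I expect the main obstacle to be the crux step above: rigorously controlling $\partial S_{\cO}$ throughout the lozenge so as to confine the distinguished half-leaf to $S_{\cO}$ up to a $\gamma$-invariant arc, which forces one to weave together the divergence estimates, the sweeping of the interior leaves across $A^u_1$, and the connectedness and $\gamma$-invariance of $S_{\cO}$.
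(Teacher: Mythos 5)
Your overall architecture tracks the paper's: apply Proposition \ref{p.boundary} at $e_1$, use the exclusivity of the two alternatives, locate $S_{\cO}$ on the lozenge side of $\cG^s(e_1)$, produce a $\gamma$-invariant arc in $\cG^u(e_1)\cap S_{\cO}$, and push it into $S$ via the map $o\mapsto c_o\cap S$. But the step you yourself flag as the crux is exactly where the argument breaks, and the justification you give for it misreads Proposition \ref{p.boundary}. The ``first case'' of that proposition provides a neighborhood $V$ of $e_1\cap D$ on which $\tau^S$ is \emph{bounded below}, where $\tau^S$ takes values in $\RR\cup\{\infty\}$; boundedness below is perfectly compatible with $\tau^S=\infty$ at points of $V$, i.e.\ with lozenge-side points near $e_1$ \emph{not} belonging to $S_{\cO}$. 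What the proof of Proposition \ref{p.boundary} actually yields is a sequence of orbits $o_{x_n}\in S_{\cO}$ converging to $e_1$ whose local stable leaves lie in $S_{\cO}$; intersecting those leaves with $A^u_1$ gives a \emph{sequence} of points of $A^u_1\cap S_{\cO}$ accumulating at $e_1$, not a sub-ray. This matters: with only what you have established, $A^u_1\cap S_{\cO}$ could a priori be a $\gamma$-invariant disjoint union of intervals such as $\bigcup_{n\in\ZZ}\gamma^n(I)$ with $\gamma(I)\cap I=\emptyset$, which accumulates at $e_1$ but contains no sub-ray and has \emph{no} $\gamma$-invariant component (an invariant component would be an interval whose endpoints are fixed by $\gamma$, and $\gamma$ has no fixed point in the interior of the half leaf, so the only invariant component is all of $A^u_1$). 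Your closing sentence about propagating membership using the lozenge structure, Lemma \ref{l.divergencetau} and $\gamma$-invariance is precisely the point that needs an argument, and none is given.

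The missing ingredient is the paper's Lemma \ref{l.lozenge}: the \emph{entire lozenge} $\cL$ is contained in $S_{\cO}$. That lemma is where hypothesis (iv), the connectedness and $\gamma$-invariance of $S_{\cO}$, and the opposite (expanding versus contracting) action of $\gamma$ at the two corners get combined: if some $o\in\cL$ lay in $\partial S_{\cO}$, then (say) $\cG^u(o)\subset\partial S_{\cO}$ by Proposition \ref{p.boundary}, while the $\gamma$-dynamics on $\cG^s(e_1)$ and $\cG^s(e_2)$ would force the connected set $S_{\cO}$ to accumulate on both corners from inside $\cL$, which is incompatible with $S_{\cO}$ being disjoint from the separating leaf $\cG^u(o)$. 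Once $\cL\subset S_{\cO}$ is available, your crux follows in a stronger form with no propagation needed: every $p\in A^u_1$ lies in $\overline{\cL}\subset\overline{S_{\cO}}$, and if $p\in\partial S_{\cO}$ then Proposition \ref{p.boundary} would force $\cG^s(p)\subset\partial S_{\cO}$ (impossible, since $\cG^s(p)$ crosses the open set $\cL\subset S_{\cO}$ by Definition \ref{defi.lozenge}) or $\cG^u(p)=\cG^u(e_1)\subset\partial S_{\cO}$ (impossible by the exclusivity you already invoked). Hence all of $A^u_1$ lies in $S_{\cO}$, which is the $\gamma$-invariant arc you wanted; it also shows $\what{\cu}(c_1)\cap S$ is the continuous image of the single interval $A^u_1$, hence connected, which is how the paper gets the ``moreover'' clause immediately rather than through your component-characterization (note also that your word ``identifying'' is too strong, since $o\mapsto c_o\cap S$ need not be injective across orbits --- the paper's remark that the curve may self-intersect reflects this). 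Without Lemma \ref{l.lozenge} or a substitute for it, your proof does not close.
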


The first thing we need to prove is that: 

\begin{lemma}\label{l.lozenge}
The set $S_{\cO}$ contains $\cL$. 
\end{lemma}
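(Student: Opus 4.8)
The plan is to argue by contradiction, using three ingredients: that $S_{\cO}$ is open (Lemma \ref{l.Sopen}) and $\gamma$-invariant, that its frontier is saturated by entire stable or unstable leaves (Proposition \ref{p.boundary}), and that inside the lozenge $\gamma$ moves the leaves of $\cG^s$ and $\cG^u$ toward the two corners in the manner dictated by Proposition \ref{p.lozenges}. I would first record what the standing hypotheses of \S\ref{ss.aditional} give us: assumption (4) provides an orbit $o\in\cL\cap S_{\cO}$, assumption (3) gives $e_1,e_2\notin S_{\cO}$, and assumption (2) together with the equivariance $\hht\circ\gamma=\gamma\circ\hht$ (so that $\hht(\gamma o)\cap S=\gamma(\hht(o)\cap S)$) gives $\gamma(S_{\cO})=S_{\cO}$.

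Now suppose $\cL\not\subset S_{\cO}$. Since $\cL$ is connected and $S_{\cO}\cap\cL$ is a nonempty proper open subset of it, there is a point $w\in\cL$ in the frontier of $S_{\cO}$, i.e.\ $w\in\partial S_{\cO}\cap\cL$. By Proposition \ref{p.boundary}, one of $\cG^s(w)$ or $\cG^u(w)$ is entirely contained in $\partial S_{\cO}$; I would treat the case $\cG^s(w)\subset\partial S_{\cO}$, the other being symmetric after exchanging the roles of $e_1$ and $e_2$ and of the two foliations. Since $w\in\cL$, Definition \ref{defi.lozenge} tells us that $\cG^s(w)$ crosses the lozenge from the side $A^u_1$ to the side $A^u_2$ and \emph{separates} $e_1$ from $e_2$ in $\cO_\phi\cong\RR^2$. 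As $S_{\cO}$ is connected and disjoint from $\cG^s(w)$, it lies entirely in one of the two complementary components of $\cG^s(w)$; let $e\in\{e_1,e_2\}$ be the corner in that component, and note that the chosen orbit $o$ lies on the same ($e$-)side.

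The contradiction then comes from iterating $\gamma$ in the direction that pushes the crossing stable leaves toward $e$. The leaves $\cG^s(\gamma^n w)=\gamma^n\cG^s(w)$ all lie in $\partial S_{\cO}$ and cross $\cL$, and their feet on $A^u_1\subset\cG^u(e_1)$ form a $\gamma$-orbit. By Proposition \ref{p.lozenges}, $\gamma$ acts increasingly on $e_1$ and decreasingly on $e_2$, hence contracts $\cG^u(e_1)$ toward $e_1$ and contracts $\cG^u(e_2)$ toward $e_2$ under $\gamma^{-1}$; by continuity of the foliation $\cG^s$ this yields $\cG^s(\gamma^{n}w)\to\cG^s(e_1)$ and $\cG^s(\gamma^{-n}w)\to\cG^s(e_2)$ as $n\to+\infty$. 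Choosing the sign of the iterate so that these leaves accumulate on $\cG^s(e)$, their feet sweep past the foot of $\cG^s(o)$ on $A^u_1$, so for large $n$ the leaf $\cG^s(\gamma^{\pm n}w)$ separates $o$ from $e$. But $\gamma$-invariance of $S_{\cO}$, together with the fact that $\gamma^{\pm n}$ fixes $e$ and maps the $e$-side of $\cG^s(w)$ to the $e$-side of $\cG^s(\gamma^{\pm n}w)$, forces the fixed orbit $o\in S_{\cO}$ to remain on the $e$-side of \emph{every} such leaf. This contradiction shows $\cL\subset S_{\cO}$.

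The main obstacle I expect is the bookkeeping in the last paragraph: one must verify the precise dynamical direction, namely that the stable (resp.\ unstable) leaves crossing the lozenge accumulate on the stable (resp.\ unstable) leaf of the \emph{correct} corner under forward or backward iteration of $\gamma$, and match this against whichever side of $\cG^s(w)$ the connected set $S_{\cO}$ happens to occupy. This is exactly the point where the hypothesis that $\gamma$ acts with opposite orientations at the two corners (assumption (1), encoded analytically in Proposition \ref{p.lozenges}) is indispensable: it is what guarantees that one of $\gamma^{+n}$, $\gamma^{-n}$ contracts the whole family of crossing leaves onto $\cG^s(e)$, trapping the fixed interior orbit $o$ on the wrong side.
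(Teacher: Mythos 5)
Your proof is correct and takes essentially the same route as the paper's: both arguments extract a frontier point of $S_{\cO}$ inside $\cL$, invoke Proposition \ref{p.boundary} to get an entire leaf of $\cG^s$ or $\cG^u$ crossing the lozenge and contained in $\partial S_{\cO}$, and then combine connectedness and $\gamma$-invariance of $S_{\cO}$ with the opposite contraction/expansion of $\gamma$ at the two corners (Proposition \ref{p.lozenges}) to reach a contradiction. The only cosmetic differences are that you treat the stable case while the paper treats the unstable one, and that you iterate the boundary leaf past the fixed interior orbit whereas the paper phrases it as the invariant set $S_{\cO}$ being forced to both sides of the separating leaf; these are the same mechanism.
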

\begin{proof}
The set $S_{\cO}$ is an open connected set which intersects $\cL$ and it 
is $\gamma$-invariant, because 
$S, \cL$ and the sides are $\gamma$ invariant.

Assume that there is a point $o \in \cL \cap \partial S_{\cO}$.
Using Proposition \ref{p.boundary} assume without loss of generality that $\cG^u(o) \subset \partial S_{\cO}$. Since $o$ is in the interior of the lozenge it follows that 
$\cG^u(o) \cap \cG^s(e_i)$ for $i=1,2$. Since $S_{\cO}$ is $\gamma$-invariant and $\gamma$ acts as an expansion on $\cG^s(e_1)$ (resp $\gamma$ acts as a  contraction on $\cG^s(e_2)$) we get that $S_{\cO}$ must accumulate in both $e_1$ and $e_2$ from inside $\cL$. 

However, since $S_{\cO}$ is connected this is forbiden by the fact 
that $\cG^u(o) \subset \partial S_{\cO}$ and so $S_{\cO}$ cannot accumulate in both. This contradiction proves the lemma. 
\end{proof} 

We can now prove 

\begin{proof}[Proof of Proposition \ref{p.closedcurve}] 
Since $\cL \subset S_{\cO}$ and $e_1, e_2 \notin S_{\cO}$ by assumption we deduce that $e_1,e_2 \in \partial S_{\cO}$. 

We can then apply Proposition \ref{p.boundary} and without loss of generality we assume that $\cG^s(e_1) \subset \partial S_{\cO}$. We will find a curve in $\what{\cu}(c_1) \cap S$ which is invariant under $\gamma$. 

First, we note that since $e_1$ is in the boundary of $S_{\cO}$ and $\cG^s(e_1) \subset \partial S_{\cO}$ it follows that $\cG^u(e_1)$ is not
contained in $\partial S_{\cO}$. Since $\cL$ is contained 
in $S_{\cO}$ which is $\gamma$ invariant, then
it follows that there is a half leaf $A$ of $\cG^u(e_1)$ (i.e. a connected component of $\cG^u(e_1) \setminus \{e_1\}$) such that every $o \in A$ verifies that $\hht(o)$ intersects $S$. 
In other words $A \subset S_{\cO}$.

We know that both $A$ and $S$ are $\gamma$-invariant. Since for each orbit $o$ we have that the intersection point $S \cap \hht(o)$ is unique, we deduce that $S \cap \what{\cu}(c_1)$ is connected. This implies that it is a $\gamma$-invariant curve as desired and that if a homeomorphism preserves $S$ and $\what{\cu}(c_1)$ then it must preserve this curve. 
\end{proof}

\begin{remark}
The curve may auto-intersect both in $\mt$ as well as in its projection
in $M$. But the point is that its projection to $M$ is the image by a continuous map of a circle. 
This is because the curve is preserved by the non trivial
element $\gamma \in \pi_1(M)$.
\end{remark}

\section{Transverse laminations}\label{s.transverselam}
We continue in the setup of \S~\ref{ss.setup}. 

Let $\Lambda$ be a lamination transverse to $Y$ which verifies that: 

\begin{itemize}
\item $\Lambda$ does not have compact leaves, 
\item the closure of the
complementary regions of $\Lambda$ (if existing) are $I$-bundles where $Y$ is uniquely integrable and such that flowlines of $Y$ form the $I$-bundle structure. 
\end{itemize}

The standing assumption will be that the Anosov flow
$\phi_t$ is not a suspension (see
Corollary~\ref{coro.AF}). Under this assumption we can show that 
the assumptions in \S~\ref{ss.aditional} are verified for 
any surface $L$ in the lift of the lamination 
$\Lambda$ to the universal cover, as follows.
First notice that since $\Lambda$ is transverse to $Y$ and
$\Lambda$ is closed, then $\Lambda$ is uniformly transverse
to $Y$.
In addition, 
the second condition above implies that $\Lambda$ can be 
completed to a foliation $\cF$ so that it does not have
compact leaves and $\cF$ is transverse to $Y$ (this is well
known, but see explicit proofs and explanations
in \cite[Lemma 3.9]{FP}). Since 
$\cF$ does not have compact leaves it is Reebless,
therefore any curve in $\mt$ transverse to $\widetilde \cF$
intersects a leaf of $\widetilde \cF$ at most once. 
In particular any curve tangent to $\widetilde Y$ intersects
a leaf of $\widetilde \Lambda$ at most once as required in \S~\ref{ss.transverse}.

\begin{prop}\label{p.lamination}
For every lozenge $\cL$ fixed by a non trivial
 deck transformation $\gamma \in \pi_1(M)$ there is a  $\gamma$-invariant leaf $L$ of $\wt{\Lambda}$  so that $L$ intersects the image by $\hht$ of some orbit in the interior of the lozenge but does not intersect the image of the corner orbits of $\cL$.  
\end{prop}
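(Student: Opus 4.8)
The plan is to produce $L$ as a limit of $\gamma$-translates of a single leaf, exploiting the opposite (increasing versus decreasing) action of $\gamma$ on the two corners. Write $o_1,o_2$ for the corner orbits of $\cL$ and $c_i=\hht(o_i)$ for their images, which are curves tangent to $\tY$; by Proposition~\ref{p.lozenges} we may assume $\gamma$ fixes each corner and acts increasingly on $o_1$ and decreasingly on $o_2$. Recall that $\cF$ is Reebless, so every leaf of $\wt{\cF}$ is a properly embedded plane separating $\mt$, and every curve tangent to $\tY$ meets a given leaf of $\wt{\Lambda}$ at most once. I first observe that the requirement that $L$ avoid $c_1$ and $c_2$ is automatic once $L$ is $\gamma$-invariant: since $\gamma o_i=o_i$ and $\hht$ is equivariant, each $c_i$ is $\gamma$-invariant, so if a $\gamma$-invariant leaf $L$ met $c_i$ it would meet it in a single point $p$, and then $\gamma(p)\in L\cap c_i=\{p\}$ would be a fixed point of $\gamma$, which is impossible for a nontrivial deck transformation. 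Thus the genuine task is to find a $\gamma$-invariant leaf meeting the image of some interior orbit.

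The key lemma is that no leaf $E$ of $\wt{\Lambda}$ meets both $c_1$ and $c_2$. To see this, co-orient $E$ by $\tY$, writing $E^{+},E^{-}$ for the complementary half-spaces with $\tY$ pointing into $E^{+}$; since $\tY$ points in the positive flow direction along each $c_o$, a point of $c_i$ lying flow-forward of $E\cap c_i$ lies in $E^{+}$. If $E$ met $c_1$ at $p_1$ and $c_2$ at $p_2$, then $\gamma(p_1)$ is flow-forward of $p_1$ (increasing action), so $\gamma(p_1)\in E^{+}$, while $\gamma(p_2)$ is flow-backward of $p_2$ (decreasing action), so $\gamma(p_2)\in E^{-}$. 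Hence the leaf $\gamma(E)$ meets both $E^{+}$ and $E^{-}$, so it meets $E$; as distinct leaves of a lamination are disjoint, $\gamma(E)=E$, forcing $\gamma(p_1)=p_1$, a contradiction. This is exactly where Proposition~\ref{p.lozenges} is indispensable.

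With this in hand I construct $L$. Since $c_1$ is a proper integral curve of $\tY$ it cannot be contained in a complementary region, where the $\tY$-integral curves are the compact $I$-fibers; hence $c_1$ meets some leaf $E_0$ of $\wt{\Lambda}$, say at $p_0$. The translates $\gamma^n(E_0)$ meet $c_1$ at $\gamma^n(p_0)$, which march monotonically toward the positive ideal end of $c_1$; being pairwise disjoint separating planes meeting $c_1$ in a monotone sequence, they are nested and increasing, and by the lemma none of them meets $c_2$. Using that a positive ray of $o_1$ stays a bounded distance from a negative ray of $o_2$ (Proposition~\ref{p.lozenges}), the translates are trapped between $c_1$ and $c_2$ and, by closedness of $\Lambda$, accumulate on a leaf $L$ separating $c_1$ from $c_2$; since $\gamma\,\gamma^n(E_0)=\gamma^{n+1}(E_0)$ shares the same limit, $L$ is $\gamma$-invariant. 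Finally, writing $\wt{\cL}=\bigcup_{o\in\cL}o$ for the flow-saturation of the lozenge (a connected set), the image $\hht(\wt{\cL})$ is connected and contains $c_1$ and $c_2$ in the two distinct components of $\mt\setminus L$; hence it meets $L$ at a point $\hht(z)$ with $z\in o$ for some $o\in\cL$, and $o$ is an interior orbit because $L$ avoids $c_1$ and $c_2$. This is the required leaf.

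The main obstacle is the convergence step: showing that the nested translates $\gamma^n(E_0)$ do not escape to infinity but accumulate on a genuine leaf of $\wt{\Lambda}$ that truly separates $c_1$ from $c_2$. This is precisely where the closedness of $\Lambda$ and the asymptotic structure of the freely-homotopic-inverse corners enter, most transparently by passing to the leaf space of $\wt{\cF}$: there $\gamma$ translates the arc of leaves meeting $c_1$ toward the ideal end it shares with the arc of leaves meeting $c_2$, and the disjointness of these two arcs (the lemma) forces a $\gamma$-fixed leaf between them, which is the separating leaf $L$.
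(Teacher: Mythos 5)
Your proposal is correct and follows essentially the same route as the paper: the same key lemma (no leaf of $\wt{\Lambda}$ meets both $c_1$ and $c_2$, proved via the separation property and the opposite actions of $\gamma$ from Proposition~\ref{p.lozenges}), the same construction of $L$ as the boundary leaf of the nested $\gamma$-translates of a leaf through $c_1$, and the same connectedness argument for the intersection with an interior orbit. Your extra observation that $\gamma$-invariance of $L$ automatically forces $L$ to avoid $c_1$ and $c_2$ (by freeness of the deck action) is a nice shortcut, and your acknowledged convergence step is resolved exactly as in the paper, via uniqueness of the separating leaf in the ($\gamma$-invariant) accumulation set.
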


\begin{proof}
Let $o_1, o_2$ be the corners of the lozenge $\cL$. Take a leaf $E \in \wt{\Lambda}$ intersecting $c_1= \hht(o_1)$. 
There is always such a leaf because $c_1$ is a properly embedded
 curve
tangent to $Y$ and the closures of complementary regions of $\Lambda$
are $I$-bundles where $Y$ is uniquely integrable.

\begin{claim}
The leaf $E$ cannot intersect $c_2=\hht(o_2)$. 
\end{claim}
\begin{proof}
Assume that $\gamma$ acts increasingly on $o_1$.
Proposition \ref{p.lozenges} implies that $\gamma$ acts decreasingly on $o_2$. Since $\hht$ commutes with deck transformations we get that the same happens in $c_1$ and $c_2$. 
Now we again use that $\Lambda$ can be completed to a Reebless
foliation. This implies that any leaf of $\widetilde \Lambda$
separates $\mt$. Since $\gamma$ acts increasingly in $o_1$ and
hence also in $c_1$, it follows that $\gamma(E)$ is on the
positive side of $E$ with respect to $Y$.
If $E$ intersects $c_2$ then since $\gamma$ acts decreasingly
on $o_2$, the same argument shows that $\gamma(E)$ is contained
in the negative side of $E$ with respect to $Y$. This is a contradiction
and proves the claim.
\end{proof}

Therefore, we can consider $V$ to be the region in $\mt$ between $E$ and $\gamma(E)$. 
We claim that $c_2$ cannot intersect $V$: $c_2$ is $\gamma$-invariant,
and if it  intersects $V$ then it must intersect $\gamma^{-1}(V)$ and thus intersect $E$ a contradiction. 

 Therefore, the open region 

$$R  \ = \ \bigcup_n \gamma^n(V \cup E)$$ 

\noindent
contains $c_1$ and is disjoint from $c_2$. The boundary of $R$ is accumulated by translates of $E$  under $\gamma^n, n \rightarrow \infty$,
therefore is saturated by leaves of $\wt{\Lambda}$. There must be a single leaf $L \in \wt{\Lambda}$ in the boundary
of $R$ that separates $c_1$ from $c_2$. By construction this leaf does
not intersect $c_1$ nor $c_2$ and is invariant by $\gamma$. 

We need to show that $L$ intersects the image of some orbit in the lozenge. But this is true because otherwise one could
connect $c_1$ and $c_2$ by a path with endpoints one in $c_1$ one
in $c_2$ and the interior a path which is the 
projection of a path in the lozenge $\cL$.
Hence $c_1, c_2$ would be 
in the same connected component of the complement of $L$. This completes the proof. 
\end{proof}

As a consequence of Proposition \ref{p.lamination} and Proposition \ref{p.closedcurve} we deduce:

\begin{corollary}\label{cor.gammainvcurve}
A lamination with the properties stated in the
beginning of this section verifies that there is a leaf $L \in \wt{\Lambda}$ invariant under a non trivial $\gamma \in \pi_1(M)$ and a $\gamma$-invariant orbit $o \in \cO_\phi$ such that $\what{\cs}(c_o) \cap L$ contains a curve which is $\gamma$-invariant. 
\end{corollary}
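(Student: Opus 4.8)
The plan is to combine Corollary~\ref{coro.AF}, Proposition~\ref{p.lamination} and Proposition~\ref{p.closedcurve}, exactly the chain announced before the statement. Since the standing assumption of this section is that $\phi_t$ is not a suspension, Corollary~\ref{coro.AF} furnishes a lozenge $\cL \subset \cO_\phi$ fixed by a non trivial $\gamma \in \pi_1(M)$; its corners $e_1,e_2$ are lifts of periodic orbits and hence are individually fixed by $\gamma$. By Proposition~\ref{p.lozenges}, $\gamma$ acts increasingly on one corner and decreasingly on the other, so after relabeling I may assume it acts increasingly on $e_1$ and decreasingly on $e_2$; this is hypothesis (1) of \S\ref{ss.aditional}. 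Feeding $\cL$ into Proposition~\ref{p.lamination} produces a $\gamma$-invariant leaf $L$ of $\wt{\Lambda}$ which meets $\hht$ of some orbit in the interior of $\cL$ but is disjoint from $c_1 := \hht(e_1)$ and $c_2 := \hht(e_2)$. Setting $S := L$, these are precisely hypotheses (2), (3) and (4) of \S\ref{ss.aditional}; the blanket requirement of \S\ref{ss.transverse} that each curve tangent to $\wt Y$ meet $S$ at most once holds automatically here, since $\Lambda$ completes to a Reebless foliation as recalled at the start of the section.

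Now I would run the mechanism inside the proof of Proposition~\ref{p.closedcurve}. Since $e_1,e_2 \notin S_{\cO}$ while $\cL \subset S_{\cO}$ (Lemma~\ref{l.lozenge}), both corners lie in $\partial S_{\cO}$, and Proposition~\ref{p.boundary} forces, at each corner $e_i$, exactly one of $\cG^s(e_i)$, $\cG^u(e_i)$ to be contained in $\partial S_{\cO}$. If $\cG^u(e_i) \subset \partial S_{\cO}$, then a half leaf of $\cG^s(e_i)$ lies in $S_{\cO}$, and intersecting $S$ with the $\hht$-image of that family gives a connected, hence $\gamma$-invariant, curve in $\what{\cs}(c_{e_i}) \cap L$; the opposite case $\cG^s(e_i) \subset \partial S_{\cO}$ would instead produce a curve in $\what{\cu}(c_{e_i})$. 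So to land in the weak stable image, as the statement requires, I need a corner $e_i$ with $\cG^u(e_i) \subset \partial S_{\cO}$, and I set $o := e_i$.

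The main obstacle is therefore to guarantee that at least one corner is of this ``$\cG^u$-type'', i.e.\ to rule out that $\cG^s(e_1) \subset \partial S_{\cO}$ and $\cG^s(e_2) \subset \partial S_{\cO}$ hold simultaneously. Here the separation property of $L$ is decisive. By Proposition~\ref{p.boundary}, the condition $\cG^s(e_i) \subset \partial S_{\cO}$ corresponds to $\tau^S \to +\infty$ as one approaches $e_i$ from inside $S_{\cO}$, which places $c_i$ on a definite side of $L$ (the negative $Y$-side), while $\cG^u(e_i) \subset \partial S_{\cO}$ corresponds to $\tau^S \to -\infty$ and the opposite side. Since $L$ separates $c_1$ from $c_2$ (the defining property of $L$ in Proposition~\ref{p.lamination}) and every leaf of $\wt{\Lambda}$ separates $\mt$, the curves $c_1, c_2$ lie on opposite sides of $L$; hence the two corners cannot both be of the $\cG^s$-type. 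Consequently some corner $e_i$ satisfies $\cG^u(e_i) \subset \partial S_{\cO}$, and with $o := e_i$ this produces the required $\gamma$-invariant curve in $\what{\cs}(c_o) \cap L$. I expect all the genuine content to sit in Propositions~\ref{p.lamination} and~\ref{p.closedcurve}; the only real assembly work is the sign bookkeeping just described, matching the two divergence behaviors of $\tau^S$ at $e_1$ and $e_2$ to the two sides of $L$. (Had the statement only asked for a curve in $\what{\cs}(c_o)$ or $\what{\cu}(c_o)$, this last step would be unnecessary, as the two foliations enter the hypotheses symmetrically.)
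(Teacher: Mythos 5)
Your proof is correct and follows exactly the paper's (unwritten) argument: the paper obtains this corollary by simply concatenating Corollary \ref{coro.AF}, Proposition \ref{p.lamination} and Proposition \ref{p.closedcurve}, which is precisely your first paragraph, with $S:=L$ and the Reebless-completion remark justifying the standing hypotheses of \S\ref{ss.transverse} and \S\ref{ss.aditional}. Your remaining paragraphs supply a correct refinement that the paper glosses over: Proposition \ref{p.closedcurve} only yields a curve in $\what{\cs}(c_1)\cap S$ \emph{or} in $\what{\cu}(c_1)\cap S$, whereas the corollary asserts the stable case, and your sign bookkeeping --- that $\cG^s$-type and $\cG^u$-type corners force $c_i$ onto opposite $Y$-sides of $L$, so the separation of $c_1$ from $c_2$ (a fact contained in the proof, not the statement, of Proposition \ref{p.lamination}) guarantees one corner of $\cG^u$-type and hence a $\gamma$-invariant curve in $\what{\cs}(c_o)\cap L$ --- is a valid way to pin this down (harmless for the paper's applications, which work equally with either foliation).
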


\section{Accessibility and ergodicity of partially hyperbolic diffeomorphisms} 
\subsection{Setup}\label{ss.setup2} 
We let $f: M \to M$ be a collapsed Anosov flow with respect to an Anosov flow $\phi_t: M \to M$ which is not a suspension.  There is a self orbit equivalence $\beta: M \to M$ and $h: M \to M$ a map homotopic to the identity, so that $h$ sends orbits of $\phi_t$ injectively onto curves tangent to $E^c$. In addition $f \circ h = h \circ \beta$.

In addition Remark \cite[Remark 2.6]{BFP} shows that for a collapsed
Anosov flow, the center bundle $E^c$ is orientable, and
it shows that $\partial_t h((\phi_t(x)) |_{t = 0}$ induces
an orientation on the center bundle $E^c$.
Therefore we choose 
$Y$ to be  the vector field of norm one such that 
$$Y(h(x)) \ \  \in \ \ \RR_{+}\partial_t h(\phi_t(x))|_{t=0}.$$ 
\noindent
The vector field $Y$ is contained in the $E^c$ bundle.
Note that this is the context of \S \ref{ss.setup}. 

To prove Theorem \ref{teo.main} we will assume that the non-wandering set of $f$ is all of $M$. This allows us to apply Theorem \ref{teo-HHU}. We will assume by contradiction that $f$ is not accessible so that Theorem \ref{teo-HHU} implies that there is a lamination $\Lambda^{su}$ tangent to $E^{s} \oplus E^u$ which satisfies the conditions of \S~\ref{s.transverselam}. 

As explained before, the proof of Theorem \ref{teo.main} is not affected if we take finite lifts and iterates, so we will assume for simplicity that all bundles of $f$ are orientable and their orientation is preserved by $Df$. We will reach a contradiction that will prove that $f$ is accessible, then the ergodicity part of Theorem \ref{teo.main} follows immediately from Theorem \ref{teo.BW}. 

\subsection{Strong collapsed Anosov flows}\label{ss.scaf}
We first give a direct proof under the extra assumption that $f$ is a \emph{strong collapsed Anosov flow} (see \cite{BFP} for discussions). This implies in particular that the map $h$ maps weak stable and weak unstable leaves of $\phi_t$ into surfaces tangent to $E^{cs}=E^s \oplus E^c$ and $E^{cu}= E^c \oplus E^u$ respectively. The reason we first show this case is that here we will not need to use the dynamics at all, and will get a direct contradiction to Proposition \ref{p.nocircles}. 

The contradiction follows from applying Corollary \ref{cor.gammainvcurve} which in this case produces a closed curve tangent to $E^s$ which is $\gamma$ invariant, contradicting Proposition \ref{p.nocircles}. 
This is because in this case $\what{\cs}$ is tangent to $E^{cs}$
and $\Lambda^{su}$ is tangent to $E^s \oplus E^u$, hence the
intersection is tangent to $E^s$.

\begin{remark}
As explained above, in principle, collapsed Anosov flows may always have this stronger property (that is, we currently know no example which
is a collapsed Anosov flow, but not a strong collapsed Anosov flow).
\end{remark}

\subsection{Collapsed Anosov flows}\label{ss.caf}
The proof of Theorem \ref{teo.main} is harder since it will need to appeal to \cite{FP} for some cases. However, for the other cases it is just slightly harder: instead of using Proposition \ref{p.nocircles} we will use some properties of self-orbit equivalences (cf. Proposition \ref{p.periodiclozenge}) to show that the curve given by Corollary \ref{cor.gammainvcurve} is also $\hat f$-invariant for some lift $\hat f$
of some iterate of $f$ which will be enough to get a contradiction. 

We now come back to the setting of \S~\ref{ss.setup2} and do not make the assumption on $h$ we did in the previous subsection (Subsection on
strong collapsed Anosov flows). 

\begin{lemma}\label{l.reduction1}
If $f$ is not accessible then there is some iterate of $\beta$ which has a lift $\hat \beta$ to $\mt$ which fixes some lozenge $\cL$ in $\cO_\phi$,
and so that $\cL$ is invariant by some non trivial $\gamma \in \pi_1(M)$..  
\end{lemma}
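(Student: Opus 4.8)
The plan is to recognize that the conclusion of the lemma is literally the first of the three alternatives furnished by Proposition \ref{p.periodiclozenge}, and then to use the non-accessibility hypothesis, via \cite{FP}, to discard the other two.

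First I would check that Proposition \ref{p.periodiclozenge} applies to the self orbit equivalence $\beta$. By the standing assumption of \S\ref{ss.setup2} the flow $\phi_t$ is not a suspension, so (by Brunella's result quoted after Corollary \ref{coro.AF}) $\pi_1(M)$ is not virtually solvable. Since $\phi_t$ is a topological Anosov flow and $\beta$ a self orbit equivalence of it, Proposition \ref{p.periodiclozenge} yields one of its three possibilities.

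Possibility (1) is, verbatim, the statement to be proved: a lozenge $\cL$ fixed by a nontrivial $\gamma \in \pi_1(M)$ and invariant under a lift $\hat\beta$ of an iterate of $\beta$. Hence it suffices to rule out (2) and (3) under the assumption that $f$ is not accessible. In possibility (2) the manifold $M$ is hyperbolic, and in possibility (3) $M$ is Seifert fibered with $\beta$ acting on the base as a pseudo-Anosov. In either case the isotopy class of $f$ coincides with that of $\beta$---because $h$ is homotopic to the identity, so $f$ and $\beta$ induce the same action on $\pi_1(M)$ (cf.\ \S\ref{ss.orient})---and this class belongs to those for which \cite{FP} already establishes accessibility of the corresponding partially hyperbolic diffeomorphisms, appealing in the hyperbolic case also to the classification of \cite{FP2}. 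Thus (2) or (3) would force $f$ to be accessible, contrary to hypothesis, leaving only possibility (1).

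The main obstacle I anticipate is exactly this exclusion step: one must match the structural descriptions of $\beta$ produced by Proposition \ref{p.periodiclozenge}---a power of a one step up map of an $\RR$-covered flow in the hyperbolic case, and a pseudo-Anosov action on the Seifert base in the other---with the manifolds and isotopy classes for which accessibility is known from \cite{FP}. Carrying this out cleanly requires tracking the passage to iterates of $\beta$ and to finite covers, together with the observation of \S\ref{ss.orient} that accessibility is unaffected by such operations.
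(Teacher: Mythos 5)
Your proposal is correct and follows essentially the same route as the paper: apply Proposition \ref{p.periodiclozenge} (noting that the non-suspension assumption gives non-virtually-solvable fundamental group), observe that alternative (1) is the desired conclusion, and rule out alternatives (2) and (3) by invoking the accessibility results of \cite{FP} (Theorems B and D there, for the hyperbolic and Seifert pseudo-Anosov cases respectively), contradicting the non-accessibility hypothesis. The extra details you supply---verifying the hypothesis of Proposition \ref{p.periodiclozenge} and matching the isotopy class of $f$ with that of $\beta$ via $h$ being homotopic to the identity---are exactly the implicit steps in the paper's terser argument.
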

\begin{proof}
We apply Proposition \ref{p.periodiclozenge}. Unless the conclusion of the lemma is verified, we get that either $M$ is hyperbolic or $M$ is Seifert and $\beta$ acts as pseudo-Anosov in the base. Both cases where treated in \cite{FP} (see \cite[Theorem B]{FP} for the hyperbolic manifold case and \cite[Theorem D]{FP} for the Seifert case with base pseudo-Anosov). 
Specifically, in both cases, we proved in \cite{FP} that $f$
is accessible.
In the other cases we obtain a lozenge as in 
Proposition \ref{p.periodiclozenge}.
\end{proof}

As a consequence we deduce from Corollary \ref{cor.gammainvcurve} that there is a leaf $L \in \wt{\Lambda^{su}}$ which is invariant under the deck tansformation $\gamma$ as in the previous lemma,
as well as $L$ is invariant under 
some lift $\hat f: \mt \to \mt$ of an iterate of $f$. The proof of Theorem \ref{teo.main} is therefore completed with the following extension of Proposition \ref{p.nocircles}:

\begin{proposition}\label{p.curvesgen}
There are no closed curves (not necessarily injectively embedded) invariant under $f$ in an $f$ invariant  surface tangent to $E^{s} \oplus E^u$. 
\end{proposition}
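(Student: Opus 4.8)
The plan is to reduce the statement to the non‑existence of closed curves tangent to $E^s$, i.e. to Proposition \ref{p.nocircles}, by a length–expansion argument. First I would pass to a suitable iterate so that partial hyperbolicity is uniform: there are $\lambda<1<\mu$ with $\|Df\, v^s\|\le\lambda$ and $\|Df\, v^u\|\ge\mu$ for unit vectors $v^s\in E^s$, $v^u\in E^u$. This is harmless, since the curve is invariant under every iterate and the conclusion is unaffected by iterating. Let $\Sigma$ be the $f$‑invariant surface tangent to $E^s\oplus E^u$ and let $c$ be the $f$‑invariant closed curve inside it. Because $c$ is invariant and $f$ is a homeomorphism, I would parametrize $c$ by a circle $S^1$ so that $f\circ c=c\circ\sigma$ for a homeomorphism $\sigma$ of $S^1$; this is exactly the structure of the curve produced by Corollary \ref{cor.gammainvcurve}, which is invariant under $\gamma$ and under a lift $\hat f$ of an iterate of $f$, the lift inducing the reparametrization $\sigma$ on the quotient circle. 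Write $L<\infty$ for the length of $c$.

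The key point is that the length of $c$ is simultaneously preserved by $f$ and, unless $c$ is tangent to $E^s$, forced to grow. Indeed $f^n\circ c=c\circ\sigma^n$ is a reparametrization of $c$, so by invariance of length under reparametrization (which holds even if $c$ is not injective) one has $\int_{S^1}\|Df^n\dot c(t)\|\,dt=L$ for every $n$. On the other hand, since $\Sigma$ is tangent to $E^s\oplus E^u$, the tangent decomposes as $\dot c=a\,e^s+b\,e^u$ with $e^s\in E^s$, $e^u\in E^u$ unit vectors, and by invariance of the line bundles $\|Df^n\dot c\|=\|a\,\Lambda^s_n e^s+b\,\Lambda^u_n e^u\|$ with $|\Lambda^s_n|\le\lambda^n\to 0$ and $\Lambda^u_n\ge\mu^n\to\infty$. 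Using the uniform transversality of $E^s$ and $E^u$, this norm tends to $\infty$ at each point where $b\neq 0$ and to $0$ where $b=0$. If $b$ were nonzero on a subarc, then $\int_{S^1}\|Df^n\dot c\|\,dt\to\infty$, contradicting that this integral equals $L$; in the merely rectifiable case the same conclusion follows from Fatou's lemma applied to $\|Df^n\dot c\|\ge 0$. Hence $b\equiv 0$, i.e. $\dot c\in E^s$, so that $c$ is a closed curve tangent to $E^s$, contradicting Proposition \ref{p.nocircles}.

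The main obstacle I expect is not the expansion estimate, which is elementary, but the two regularity/parametrization points that make it rigorous. First, one must guarantee that the (possibly non‑embedded) invariant curve can be parametrized so that $f$ acts as a genuine reparametrization $c\circ\sigma$ of the \emph{parametrized} loop, and not merely as a set map; this is what licenses the identity $\int_{S^1}\|Df^n\dot c\|\,dt=L$, and it is precisely the structure guaranteed by the construction of the $\gamma$‑invariant curve in Proposition \ref{p.closedcurve} and Corollary \ref{cor.gammainvcurve}. Second, one must upgrade the conclusion ``$\dot c\in E^s$ almost everywhere'' to a bona fide contradiction with Proposition \ref{p.nocircles}: since $E^s$ is uniquely integrable, a curve with $\dot c\in E^s$ lies in a single stable leaf, and a closed such curve with continuous tangent is an immersed loop tangent to $E^s$, which is exactly what Proposition \ref{p.nocircles} forbids. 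I would therefore record at the outset that the curve has a tangent direction almost everywhere and lies in the $C^1$ surface $\Sigma$, so that the decomposition $\dot c=a\,e^s+b\,e^u$ is meaningful, and then run the argument above.
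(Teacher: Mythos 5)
Your route is genuinely different from the paper's. The paper's proof is soft and derivative-free: if the invariant curve is not tangent to $E^s$, forward iterates of it contain segments converging to segments of unstable leaves; since the curve is invariant and compact, a limit argument puts an actual unstable segment inside the curve, invariance then forces the entire curve into one unstable leaf, and this contradicts Proposition \ref{p.nocircles} on the $E^u$ side. You instead push toward $E^s$ with a quantitative length/Fatou computation. For rectifiable curves admitting a parametrization with $f\circ c=c\circ\sigma$, your computation is essentially correct (and even gives slightly more: running it also for $f^{-1}$ shows $\dot c\in E^s\cap E^u=\{0\}$ a.e., so any such invariant loop is constant, bypassing the unique-integrability step entirely).

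However, there is a genuine gap: the regularity you grant yourself at the outset. The proposition is stated for arbitrary continuous closed curves, and the curve it is actually applied to, produced by Corollary \ref{cor.gammainvcurve}, is a connected component of $L\cap\what{\cu}(c_o)$, where $\what{\cu}(c_o)=\hht(\wt{\cF^{wu}}(o))$ is merely the image of a plane under the continuous map $\hht$ (which is $C^1$ along flow lines only); the curve is parametrized continuously by the orbits of a half leaf of $\cG^u$, and nothing more. Such a curve can have infinite length and fail to have a tangent line at every single point (think of a Koch-type curve drawn inside a $C^1$ surface), so ``record at the outset that the curve has a tangent direction almost everywhere'' is not a recording but an unproved, and in general false, assumption; once $L=\infty$, the identity $\int_{S^1}\|Df^n\dot c\|\,dt=L$ carries no information and Fatou yields no contradiction. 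This is exactly what the paper's geometric-limit argument is designed to avoid: Hausdorff limits of iterated sub-continua replace derivatives, so no rectifiability is ever needed. Two smaller instances of the same overreach: set-invariance of a non-injective loop under the homeomorphism $f$ does not by itself give a reparametrization $f\circ c=c\circ\sigma$ (though for the curve of Corollary \ref{cor.gammainvcurve} this structure is indeed available, since $\hat f$ permutes the intersection points $S\cap\hht(o)$ according to the action of $\hat\beta$ on the parametrizing half leaf); and your last step invokes a ``continuous tangent'' to obtain an immersed loop tangent to $E^s$, whereas your argument only provides an a.e.\ derivative. To salvage your approach one would have to prove the curve is rectifiable (there is no reason it is) or replace the length computation by a regularity-free argument such as the paper's.
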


\begin{proof}
If an $f$-invariant curve is not tangent to $E^s$, then iterates
of it have segments closer and closer to segments in unstable leaves.
Again since the curve is invariant one obtains that the curve
contains a segment which is contained in an unstable leaf.
But then invariance implies that the entire curve is contained
in an unstable leaf. This is impossible by Proposition \ref{p.nocircles}.
\end{proof}

\begin{remark}
Another proof of the previous proposition follows from the fact that one can construct a \emph{Lyapunov function} for the action of $f$ in the image of $\eta$ to show that $f$ is an expansive homeomorphism in the image of $\eta$ (see \cite[\S 2]{Potrie-Lew}) . Since the circle does not admit expansive homeomorphisms, that gives a contradiction. 
\end{remark}

{\small \emph{Acknowledgements:} The authors would like to thank Santiago Martinchich for discussions and his comments on the paper. }

\end{document}